\documentclass{article}

\usepackage{amsmath}
\usepackage{amssymb}
\usepackage{amsthm}

\newtheorem{thm}{Theorem}
\newtheorem{conj}{Conjecture}
\newtheorem{lem}{Lemma}
\newtheorem{prop}{Proposition}

\topmargin -0.5in \oddsidemargin -0.1in \textwidth 6in \textheight 8in

\title{Variance of squarefull numbers in short intervals II}
\author{Tsz Ho Chan}
\date{}

\begin{document}
\maketitle

\begin{abstract}
In this paper, we continue the study on variance of the number of squarefull numbers in short intervals $(x, x + 2 \sqrt{x} H + H^2]$ with $X \le x \le 2X$. We obtain the expected asymptotic for this variance over the range $X^\epsilon \le H \le X^{0.180688...}$ unconditionally and over the optimal range $X^\epsilon \le H \le X^{0.25 - \epsilon}$ conditionally on the Riemann Hypothesis or the Lindel\"{o}f Hypothesis.
\end{abstract}

\section{Introduction and Main Results}

A number $n$ is {\it squarefree} if $p^2 \nmid \; n$ for every $p \mid n$, i.e. all the exponents in its prime factorization are exactly one. In contrast, a number $n$ is {\it squarefull} or {\it powerful} if $p^2 \mid n$ for every prime $p \mid n$, i.e. all the exponents in its prime factorization are at least two. It is well-known that every squarefull number $n$ can be factored uniquely as $n = a^2 b^3$ for some integer $a$ and squarefree number $b$. Let $Q(x)$ denote the number of squarefull numbers which are less than or equal to $x$. 
Then
\begin{equation} \label{squarefull}
Q(x) = \sum_{a^2 b^3 \le x} \mu^2(b)
\end{equation}
where $\mu(n)$ is the M\"{o}bius function. Bateman and Grosswald \cite{BG} proved that
\begin{equation} \label{fullcount}
Q(x) = \frac{\zeta(3/2)}{\zeta(3)} x^{1/2} + \frac{\zeta(2/3)}{\zeta(2)} x^{1/3} + O \bigl( x^{1/6} e^{-C_1 (\log x)^{4/7} (\log \log x)^{-3/7}} \bigr)
\end{equation}
for some absolute constant $C_1 > 0$ where $\zeta(s)$ is the Riemann zeta function. We are interested in squarefull numbers in short intervals, namely
\[
Q(x + y) - Q(x) \; \; \text{ with } \; \; y = o(x).
\]
Bateman and Grosswald's result \eqref{fullcount} yields
\begin{equation} \label{shortcount1}
Q \bigl(x + x^{1/2 + \theta} \bigr) - Q(x) \sim \frac{\zeta(3/2)}{2 \zeta(3)} x^\theta
\end{equation}
for $1/6 = 0.1666... \le \theta < 1/2$. The current best range of validity for \eqref{shortcount1} was obtained by Trifonov \cite{T}, showing that the asymptotic formula \eqref{shortcount1} is true for $19 / 154 = 0.1233... < \theta < 1/2$.

\bigskip

Inspired by a recent breakthrough work of Gorodetsky, Matom\"{a}ki, Radziwi\l{}\l{} and Rodgers \cite{GMRR} on variance of squarefree numbers in short intervals and arithmetic progressions, the author \cite{C} studied the variance of squarefull numbers in short intervals:
\[
\frac{1}{X} \int_{X}^{2 X} \Big| Q(x + y) - Q(x) - \frac{\zeta(3/2)}{\zeta(3)} (\sqrt{x + y} - \sqrt{x}) \Big|^2 dx
\]
and proved
\begin{thm} \label{thm1}
Given $\epsilon > 0$, $X > 1$ and $X^{4\epsilon} \le H \le X^{1/2 - \epsilon}$. With $y := 2 \sqrt{x} H + H^2$,
\[
\frac{1}{X} \int_{X}^{2 X} \Big|Q(x + y) - Q(x) - \frac{\zeta(3/2)}{\zeta(3)} H \Big|^2 dx \ll_\epsilon H^{16 / 11 + \epsilon}.
\]
Note that $16 / 11 = 1.454545\ldots$ and $\sqrt{x + y} - \sqrt{x} = H$ (which explains our choice of $y$).
\end{thm}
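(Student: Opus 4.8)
The plan is to reduce the variance to a mean square of sawtooth sums over squarefree moduli, split the moduli at a threshold, and treat the small and large moduli by an exponential-sum argument and by a lattice-point count respectively. Starting from the hyperbola identity $Q(x)=\sum_{b}\mu^{2}(b)\lfloor (x/b^{3})^{1/2}\rfloor$ and writing $t=\sqrt x$ — so that the choice $y=2\sqrt x\,H+H^{2}$ makes $\sqrt{x+y}=t+H$ — one gets
\[
Q(x+y)-Q(x)=\frac{\zeta(3/2)}{\zeta(3)}H-\sum_{b\le X^{1/3}}\mu^{2}(b)\,\Psi_{b}(t)+O\!\left(HX^{-1/6}\right),\qquad \Psi_{b}(t):=\psi\!\left(\frac{t+H}{b^{3/2}}\right)-\psi\!\left(\frac{t}{b^{3/2}}\right),
\]
with $\psi(u)=u-\lfloor u\rfloor-\tfrac12$; here $\sum_{b}\mu^{2}(b)b^{-3/2}=\zeta(3/2)/\zeta(3)$, and its tail past $X^{1/3}$ together with the secondary term $\zeta(2/3)\zeta(2)^{-1}\bigl((x+y)^{1/3}-x^{1/3}\bigr)$ of \eqref{fullcount} are each $\ll HX^{-1/6}$ and so contribute $\ll H^{2}X^{-1/3}\ll H^{16/11}$ to the variance (using $H\le X^{1/2}$). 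After the substitution $x=t^{2}$, Theorem~\ref{thm1} reduces to bounding $\tfrac1{\sqrt X}\int_{\sqrt X}^{\sqrt{2X}}\bigl|\sum_{b\le X^{1/3}}\mu^{2}(b)\Psi_{b}(t)\bigr|^{2}dt$ by $H^{16/11+\epsilon}$, and I would split the $b$-sum at a parameter $B_{1}$.

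For the small moduli $b\le B_{1}$, expand $\psi$ in its Fourier series via a Vaaler-type trigonometric polynomial of degree $H_{0}$ (a fixed power of $X$); the mean-square truncation error is $\ll X^{O(1)}H_{0}^{-1}$ and is negligible. One is left with the mean square of $\sum_{b\le B_{1}}\mu^{2}(b)\sum_{0<|h|\le H_{0}}A_{b,h}e(ht/b^{3/2})$, where $A_{b,h}\ll\min(|h|^{-1},Hb^{-3/2})$, and opening the square produces integrals $\int_{\sqrt X}^{\sqrt{2X}}e(t\Delta)\,dt\ll\min(\sqrt X,|\Delta|^{-1})$ with $\Delta=h_{1}b_{1}^{-3/2}-h_{2}b_{2}^{-3/2}$. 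The diagonal $\Delta=0$ forces $b_{1}=b_{2}$, $h_{1}=h_{2}$ (for squarefree $b_{1},b_{2}$ the relation $h_{1}^{2}b_{2}^{3}=h_{2}^{2}b_{1}^{3}$ forces $b_{1}=b_{2}$), and contributes $\ll\sum_{b}\mu^{2}(b)\min(1,Hb^{-3/2})\ll H^{2/3}$. The off-diagonal is the real work: one bounds $|\Delta|^{-1}$ through $|h_{1}^{2}b_{2}^{3}-h_{2}^{2}b_{1}^{3}|\ge1$ and counts quadruples with this quantity small, using on one hand an elementary count of integers in the resulting short interval and on the other hand the fact that, with $h_{2},b_{1},m$ fixed, the equation $h_{1}^{2}b_{2}^{3}=m+h_{2}^{2}b_{1}^{3}$ has at most one solution $(h_{1},b_{2})$ with $b_{2}$ squarefree (determined by the factorization of $m+h_{2}^{2}b_{1}^{3}$). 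Summing dyadically, treating $b\le H^{2/3}$ and $b>H^{2/3}$ separately (the shape of $A_{b,h}$ changes there), leads to a bound polynomial in $B_{1}$ and $H$ with the expected extra $X^{\epsilon}$.

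For the large moduli $b>B_{1}$, write $\sum_{b>B_{1}}\mu^{2}(b)\Psi_{b}(t)=H\sum_{b>B_{1}}\mu^{2}(b)b^{-3/2}-N(t)$, where the first term is $\ll HB_{1}^{-1/2}$ and $N(t)$ counts squarefull numbers $a^{2}b^{3}\in(x,x+y]$ with $b>B_{1}$. Such numbers are sparse, so $N(t)$ ought to hug its mean $\ll HB_{1}^{-1/2}$; the obstruction is that the crude bound for $N(t)$ pays a $+1$ for every $b\in(B_{1},X^{1/3}]$ — a total loss of order $X^{1/3}$ — and to avoid it one must show that for almost all such $b$ the interval $(t/b^{3/2},(t+H)/b^{3/2}]$ contains no integer, i.e.\ that there are few integer points $(a,b)$, $B_{1}<b\le X^{1/3}$, within distance $H/b^{3/2}$ of the curve $ab^{3/2}=t$. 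This is exactly the kind of count addressed by Trifonov's divided-difference method \cite{T}, which yields a mean-square bound of the form $H^{2}B_{1}^{-1}$ plus smaller terms. Balancing the resulting constraints — $B_{1}$ large enough that $H^{2}B_{1}^{-1}\ll H^{16/11}$, $B_{1}$ small enough for the $b\le B_{1}$ estimate, and both compatible for $H\le X^{1/2-\epsilon}$ — produces the exponent $16/11$.

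The main obstacle is the large-modulus range: bounding, uniformly in $t$ and in mean square over $t$, the number of integer points lying on or very near the curves $ab^{3/2}=t$ with $B_{1}<b\le X^{1/3}$, strongly enough to beat the trivial $X^{1/3}$. The value $16/11$ is simply the crossing point of this loss against the contribution of the small moduli; pushing it down toward the conjectural true order $H^{2/3}$ of the variance appears to need either nontrivial bounds for the exponential sums $\sum_{b}\mu^{2}(b)e(ht/b^{3/2})$ via the Riemann zeta function, in the spirit of \cite{GMRR}, or a sharper treatment of the off-diagonal and of the large-modulus count — the object of the sequel.
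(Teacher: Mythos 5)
A preliminary remark: this paper does not actually contain a proof of Theorem~\ref{thm1} --- it is quoted from the earlier paper \cite{C} --- so the closest internal benchmark is the decomposition used for Theorem~\ref{thm2}: small $b$ handled by the Fourier expansion of $\psi$ and a diagonal/off-diagonal count (Proposition~\ref{prop1}), medium $b$ by Dirichlet polynomials and moments of $\zeta(s)$, and large $b$ trivially. Your reduction to the sawtooth sums, your diagonal computation ($\ll H^{2/3}$), and your observation that $h_1^2b_2^3=h_2^2b_1^3$ with $b_1,b_2$ squarefree forces $b_1=b_2$, $h_1=h_2$ all match the paper's small-$b$ treatment; your uniqueness-of-representation device for the off-diagonal ($v=h^2b^3$ with $b$ squarefree determines $(h,b)$) is a legitimate variant of the paper's spacing-of-fractions count. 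Up to that point the sketch is sound.

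The genuine gap is the large-modulus range, and with it the exponent itself. You assert that Trifonov's method ``yields a mean-square bound of the form $H^{2}B_1^{-1}$ plus smaller terms'' and that ``balancing \dots produces the exponent $16/11$,'' but neither claim is argued, and the decisive numerology is never carried out: nowhere in the sketch is any explicit exponent, let alone $16/11$, actually derived, so the statement being proved is not reached. Moreover the asserted input is doubtful as stated: Trifonov's results in \cite{T} are pointwise counts of lattice points near the curve, tailored to individual intervals $(x,x+x^{1/2+\theta}]$ with $\theta>19/154$, and they do not obviously give, uniformly for $X^{4\epsilon}\le H\le X^{1/2-\epsilon}$ and all $b\in(B_1,X^{1/3}]$, a mean-square estimate of the strength you need (for small $H$ the pointwise count vastly exceeds the mean square, which is exactly the obstruction you yourself identify but do not overcome). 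Finally, the claimed bound is implausibly strong: if $H^2B_1^{-1}$ were really available one would take $B_1\asymp H^{2/3}$ and, at least in the range where the elementary off-diagonal estimate is small, obtain a variance bound around $H^{4/3+\epsilon}$ --- precisely the improvement this paper says requires the zeta-moment machinery of Theorem~\ref{thm2} rather than elementary lattice-point counting. So the proposal is incomplete at its central step: the large-$b$ estimate must either be proved (the paper's route is a trivial bound with a parameter $\lambda$, giving $H^{3\lambda}$, balanced against the elementary small/medium-$b$ analysis) or replaced, and the balancing must be exhibited explicitly to certify the exponent $16/11$ over the whole range $X^{4\epsilon}\le H\le X^{1/2-\epsilon}$.
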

This implies that, for any $0 < \theta < 1/2$, the asymptotic formula \eqref{shortcount1} is true for almost all $x \in [X, 2X]$. The author also made the following
\begin{conj} \label{conj1}
Given $\epsilon > 0$, $X > 1$ and $X^\epsilon \le H \le X^{1/4 - \epsilon}$. With $y := 2 \sqrt{x} H + H^2$,
\[
\frac{1}{X} \int_{X}^{2 X} \Big|Q(x + y) - Q(x) - \frac{\zeta(3/2)}{\zeta(3)} H \Big|^2 dx \sim \frac{4 \zeta(4/3)}{3 \zeta(2)} \int_{0}^{\infty} \Bigl(\frac{\sin \pi y}{\pi y}\Bigr)^2 y^{1/3} dy \cdot H^{2/3}.
\]
\end{conj}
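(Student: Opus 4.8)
To establish the asymptotic in Conjecture~\ref{conj1} over the ranges announced above, the plan is to follow the circle of ideas of Gorodetsky, Matom\"{a}ki, Radziwi\l{}\l{} and Rodgers \cite{GMRR}: rewrite the short-interval count as a sum of sawtooth functions, read off the main term from the \emph{diagonal} of the resulting second moment, and control the \emph{off-diagonal} by mean-value estimates for the Riemann zeta function. Write $\psi(t)=t-\lfloor t\rfloor-\tfrac12$.

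\textbf{Reduction to a sawtooth sum.} For each squarefree $b$ the number of squarefull integers $a^2b^3\in(x,x+y]$ equals $\lfloor\sqrt{(x+y)/b^3}\rfloor-\lfloor\sqrt{x/b^3}\rfloor$, and since $\sqrt{x+y}=\sqrt x+H$ these two square roots differ by $H/b^{3/2}$. Summing over $b\le x^{1/3}$, using $\sum_b\mu^2(b)b^{-3/2}=\zeta(3/2)/\zeta(3)$, and discarding the boundary contribution of the $b$ with $b^3$ near $x$ gives
\[
Q(x+y)-Q(x)-\frac{\zeta(3/2)}{\zeta(3)}H=-\sum_{\substack{b\le x^{1/3}\\ \mu^2(b)=1}}\Bigl(\psi\bigl(\tfrac{\sqrt x+H}{b^{3/2}}\bigr)-\psi\bigl(\tfrac{\sqrt x}{b^{3/2}}\bigr)\Bigr)+O\bigl(HX^{-1/6}\bigr),
\]
where the error is of the same order as the unsubtracted secondary term of \eqref{fullcount} and is harmless since $(HX^{-1/6})^2=o(H^{2/3})$ throughout $H\le X^{1/4-\epsilon}$. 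Substituting $x=w^2$ reduces matters to the (appropriately weighted) second moment over $w\in[\sqrt X,\sqrt{2X}]$ of $G(w):=\sum_{b\le X^{1/3},\,\mu^2(b)=1}D_b(w)$ with $D_b(w):=\psi((w+H)/b^{3/2})-\psi(w/b^{3/2})$. It is essential to keep the squarefree condition on $b$ as a coefficient rather than expanding $\mu^2(b)=\sum_{c^2\mid b}\mu(c)$.

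\textbf{The diagonal gives exactly the conjectured constant.} Insert the truncated Fourier expansion $\psi(t)=-\tfrac1{2\pi i}\sum_{0<|h|\le\mathcal H}\tfrac{e(ht)}{h}+O\bigl(\min(1,(\mathcal H\|t\|)^{-1})\bigr)$ with $\mathcal H$ a suitable power of $X$, square $G$, and integrate; this produces a sum over $b_1,b_2,h_1,h_2$ weighted by $\frac1X\int_{\sqrt X}^{\sqrt{2X}}e\bigl(w(h_1/b_1^{3/2}-h_2/b_2^{3/2})\bigr)2w\,dw$. Because the factorization $n=a^2b^3$ with $b$ squarefree is unique, $h_1/b_1^{3/2}=h_2/b_2^{3/2}$ (equivalently $h_1^2b_2^3=h_2^2b_1^3$) forces $(b_1,h_1)=(b_2,h_2)$ --- compare $p$-adic valuations, using that $b_1,b_2$ are squarefree --- so there is a genuine diagonal, and, in contrast with the squarefree problem of \cite{GMRR} where overlapping square divisors create off-diagonal main terms, it carries the \emph{entire} main term. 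On the diagonal the weight equals $\frac1X\int_{\sqrt X}^{\sqrt{2X}}2w\,dw=1$, so, after extending the $b$-sum to all squarefree $b$ at negligible cost and using $\sum_{h\ge1}\sin^2(\pi h\lambda)/(\pi^2h^2)=\tfrac12\{\lambda\}(1-\{\lambda\})$, it equals $\sum_{\mu^2(b)=1}g(H/b^{3/2})+o(H^{2/3})$ with $g(\lambda):=\{\lambda\}(1-\{\lambda\})$. Evaluating this by Mellin inversion against $\sum_b\mu^2(b)b^{-s}=\zeta(s)/\zeta(2s)$, the simple pole at $s=1$ contributes $\tfrac{2}{3\zeta(2)}H^{2/3}\,\widehat g(-\tfrac23)$, where $\widehat g(w)=\int_0^\infty g(\lambda)\lambda^{w-1}\,d\lambda$; using $g(\lambda)=\tfrac2{\pi^2}\sum_{h\ge1}\sin^2(\pi h\lambda)/h^2$ and scaling $v=h\lambda$ one finds $\widehat g(-\tfrac23)=2\zeta(4/3)\int_0^\infty\bigl(\tfrac{\sin\pi y}{\pi y}\bigr)^2y^{1/3}\,dy$, while moving the contour to $\Re s=\tfrac12+\epsilon$ (which avoids the zeros of $\zeta(2s)$) leaves $O(H^{1/3+\epsilon})$. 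Hence the diagonal equals precisely $\tfrac{4\zeta(4/3)}{3\zeta(2)}\int_0^\infty(\tfrac{\sin\pi y}{\pi y})^2y^{1/3}\,dy\cdot H^{2/3}$, the conjectured value: the factor $(\sin\pi y/\pi y)^2=|\widehat{\mathbf 1}_{(0,1]}(y)|^2$ reflects that the short interval becomes a unit interval in the $a$-variable, and $\zeta(4/3)=\sum_h h^{-4/3}$ comes from the variable $h$ dual to $a$.

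\textbf{The off-diagonal is the main obstacle.} It remains to show that all terms with $h_1/b_1^{3/2}\ne h_2/b_2^{3/2}$, together with the $\psi$- and $b$-truncation errors, contribute $o(H^{2/3})$; this is the technical heart and the step I expect to be hardest. A term-by-term estimate is hopeless: stationary phase in $w$ only saves $1/(\sqrt X\,|h_1/b_1^{3/2}-h_2/b_2^{3/2}|)$, and although $|h_1^2b_2^3-h_2^2b_1^3|\ge1$ makes this finite, it is far too lossy, so cancellation across the whole sum must be used. The route is to recognise the off-diagonal, after Fourier/Mellin inversion of the short-interval indicator, as essentially a mean value --- on the critical line of $\zeta(2s)$ (i.e.\ $\Re s=\tfrac14$), over a range of length $\asymp X^{1/2}/H$, weighted by $|\widehat{\mathbf 1}_{(0,1]}|^2$ and tested against a Dirichlet polynomial over the $b$'s --- of $|\zeta(2s)\zeta(3s)/\zeta(6s)|^2$; bounding it then reduces to a moment estimate for $|\zeta(\tfrac12+it)|$ of suitable order and range, together with the usual large-value and mean-value machinery for the resulting Dirichlet polynomials. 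Ingham's fourth-moment bound $\int_0^T|\zeta(\tfrac12+it)|^4\,dt\ll T^{1+\epsilon}$ combined with the sharpest available subconvexity estimates yields the unconditional range $X^\epsilon\le H\le X^{0.180688\ldots}$, while the Lindel\"{o}f bound $\zeta(\tfrac12+it)\ll_\epsilon(1+|t|)^\epsilon$ (a fortiori the Riemann Hypothesis) removes the restriction on the length of that range and gives the optimal $X^\epsilon\le H\le X^{1/4-\epsilon}$; here $1/4$ is the genuine barrier, since beyond it the unsubtracted secondary term of \eqref{fullcount} is no longer $o(H^{1/3})$ and so itself contributes to the variance. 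Everything in the first two steps, and the routine bookkeeping of the truncations, is soft by comparison.
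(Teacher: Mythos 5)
Your reduction to a sawtooth sum and your diagonal evaluation are essentially the paper's: the truncated Fourier expansion of $\psi$, the observation that $h_1^2b_2^3=h_2^2b_1^3$ with $b_i$ squarefree forces $(b_1,h_1)=(b_2,h_2)$, and the Mellin evaluation of $\sum_b\mu^2(b)\{\,\cdot\,\}(1-\{\,\cdot\,\})$ against $\zeta(s)/\zeta(2s)$ reproduce exactly the main term of Propositions \ref{prop1} and \ref{prop3}, constant included. The genuine gap is in your off-diagonal step. You propose to treat \emph{all} $b\le x^{1/3}$ at once and to bound the off-diagonal by a mean value of $|\zeta(2s)\zeta(3s)/\zeta(6s)|^2$ on $\Re s=1/4$ over $|t|\ll X^{1/2}/H$. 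This cannot give $o(H^{2/3})$: on that line the factor $\zeta(3s)/\zeta(6s)$ has mean square of constant order (already the $b=1$ term contributes $1$), so the Plancherel/fourth-moment bound you describe is at best $\ll H X^{o(1)}$, which does not even recover the size of the main term, let alone isolate it. The smallness that makes the zeta-moment machinery work in the paper comes precisely from \emph{restricting} the Dirichlet polynomial to $H^{2/3+\epsilon}<b\le 2B$, where $\frac1T\int_{-T}^T|M(\tfrac34+3it)|^2\,dt\asymp B^{-1/2}\le H^{-1/3}$; the range $b\le H^{2/3+\epsilon}$, which carries the whole main term, is \emph{not} handled by moments at all but by an elementary spacing argument for the fractions $n/b^{3/2}$ (the bounds \eqref{bound2}--\eqref{bound4}, valid for $H\le X^{3/10-3\epsilon}$), and the range $b>X^{1/3}/H^{\lambda}$ is estimated trivially, with $\lambda=0$ under RH/LH, which is exactly what allows the full range $H\le X^{1/4-\epsilon}$ in Theorem \ref{thm3}. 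Without this three-way splitting your ``moment estimate'' step fails quantitatively, and the diagonal/off-diagonal dichotomy you set up cannot be closed.

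A secondary inaccuracy: even in the correct dyadic setup, ``Ingham's fourth moment plus the sharpest subconvexity'' is not what produces the unconditional exponent $\tfrac{189}{1046}$, and LH does not simply ``remove the restriction on the length of the range.'' The unconditional Proposition \ref{prop2} also needs Huxley's large-value theorem (Lemma \ref{lem-largevalue}) applied to $M$, together with a bespoke exponent-pair bound for $M(\tfrac34+it)$ obtained from van der Corput's process B and Bourgain's exponent pair (Lemma \ref{lem-M}); and under RH/LH one still needs the large-value analysis of $M$ plus the $\lambda=0$ trivial treatment of $b>X^{1/3}$ before the barrier at $H=X^{1/4}$ (coming from the secondary term of \eqref{fullcount}, as you correctly note) becomes the only obstruction. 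So the architecture of your sketch must be repaired as above before the conditional (and a fortiori the unconditional) statement follows.
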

The reason for the exponent $1/4 - \epsilon$ is that the secondary main term from \eqref{fullcount} gives
\[
\frac{\zeta(2/3)}{\zeta(2)} (\sqrt[3]{x+y} - \sqrt[3]{x}) \gg \frac{y}{X^{2/3}} \gg \frac{H}{X^{1/6}} \gg H^{1/3}
\]
when $H \ge X^{1/4}$. Assuming the Riemann Hypothesis (RH) or the Lindel\"{o}f Hypothesis $\zeta(1/2 + it) \ll_\epsilon t^\epsilon$ (LH), the author \cite{C} proved that Conjecture \ref{conj1} is true for $X^\epsilon \le H \le X^{3 / 14 - \epsilon}$. Note that $3/14 = 0.2142857 \ldots$.

\bigskip

In this paper, we obtain unconditionally
\begin{thm} \label{thm2}
Given $0 < \epsilon < 0.01$. For $X^{\epsilon} \le H \le X^{\frac{189}{1046} - 2 \epsilon}$, we have
\[
\frac{1}{X} \int_{X}^{2X} \Big| Q(x+y) - Q(x) - \frac{\zeta(3/2)}{\zeta(3)} H \Big|^2 dx
= \frac{4 \zeta(4/3)}{3 \zeta(2)} \int_{0}^{\infty} S(y)^2 y^{1/3} dy \cdot H^{2/3} + O_\epsilon ( H^{2/3 - \epsilon / 16})
\]
where $S(x) = \frac{\sin \pi x}{\pi x}$. Note that $\frac{189}{1046} = 0.180688...$.
\end{thm}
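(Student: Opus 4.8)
The plan is to expand the counting function $Q(x+y) - Q(x)$ using the factorization $n = a^2 b^3$ with $b$ squarefree, and reduce the variance to a mean value of an exponential-sum-type object. Writing $Q(x+y) - Q(x) = \sum_{a,b} \mu^2(b) \mathbf{1}_{x < a^2 b^3 \le x+y}$, the main term $\frac{\zeta(3/2)}{\zeta(3)} H$ captures the contribution of the $b^3 \le$ (small cube) range summed over $a$ via the square-counting asymptotic $\sum_{x/b^3 < a^2 \le (x+y)/b^3} 1$, whose error is roughly $(x/b^3)^{1/4}$-sized lattice-point fluctuation. After subtracting the main term, one is left with a sum over $b$ of centered square-counting errors, and squaring and integrating over $x \in [X,2X]$ turns this into a diagonal plus off-diagonal problem in the variable $b$ (and the squares $a^2$).

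\medskip

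\noindent \textbf{Key steps, in order.} First I would perform a smooth dyadic decomposition in $b$ and in $a$, and apply a Fourier/Poisson expansion to $\mathbf{1}_{x < a^2 b^3 \le x+y}$, exploiting that $\sqrt{x+y} - \sqrt{x} = H$ makes the relevant "phase" essentially $a b^{3/2}/(2\sqrt{x})$ and the window length in that variable exactly $H$. This is the step where the kernel $S(x)^2 = (\sin \pi x / \pi x)^2$ emerges: the Fejér-type kernel is the Fourier transform of the indicator of a unit-length window, and the factor $y^{1/3}$ together with the constant $\frac{4\zeta(4/3)}{3\zeta(2)}$ comes from summing $\mu^2(b) b^{3/2 - \text{stuff}}$ against $\sum 1/a^{\cdots}$, i.e. from $\sum_b \mu^2(b)/b^{s}$ evaluated near the point forced by the $1/3$-homogeneity of the cube (this is why $\zeta(4/3)/\zeta(2)$ rather than $\zeta(3/2)/\zeta(3)$ shows up — it is the $x^{1/3}$ secondary term's "derivative" data). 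Second, I would isolate the diagonal term, which after integration in $x$ produces exactly the claimed main term $\frac{4\zeta(4/3)}{3\zeta(2)} \int_0^\infty S(y)^2 y^{1/3}\,dy \cdot H^{2/3}$. Third — the real work — I would bound the off-diagonal contribution by $O_\epsilon(H^{2/3 - \epsilon/16})$; this requires estimating sums of the shape $\sum_{a_1 b_1^{3/2} \ne a_2 b_2^{3/2}} (\cdots) e\!\left(\frac{(a_1 b_1^{3/2} - a_2 b_2^{3/2})}{2\sqrt{x}}\right)$ integrated over $x$, which reduces to counting near-solutions of $a_1 b_1^{3/2} \approx a_2 b_2^{3/2}$ weighted by the size of the gap, and then invoking exponential sum / exponent pair technology (van der Corput, or the authors' earlier $16/11$-type input, sharpened) to beat the trivial bound. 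The exponent $\frac{189}{1046}$ should come out of optimizing this exponential-sum step against the lengths of the $a$- and $b$-sums.

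\medskip

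\noindent \textbf{Main obstacle.} The hard part will be the off-diagonal estimate, specifically handling the "resonance" ranges where $b_1$ and $b_2$ are comparable and of medium size, so that $a_1 b_1^{3/2} - a_2 b_2^{3/2}$ can be genuinely small without $a_1 b_1^{3/2} = a_2 b_2^{3/2}$: here a crude triangle-inequality bound fails and one must exploit cancellation in the $x$-integral (which gives a factor like $\sqrt{X}/|a_1 b_1^{3/2} - a_2 b_2^{3/2}|$, truncating the sum) together with an arithmetic count of the number of such near-coincidences. Balancing the two regimes — small gap (arithmetic counting of $a_1 b_1^{3/2} \approx a_2 b_2^{3/2}$, essentially a divisor-type problem for $b$ squarefree) against large gap (oscillatory integral decay) — is what pins down the admissible range of $H$, and squeezing it up to $\frac{189}{1046}$ presumably needs the strongest available exponent pair (or a Huxley-type bound) inserted at exactly the right place, rather than the classical $(1/2,1/2)$ van der Corput pair that would give the weaker $3/14$. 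I expect the constant $\epsilon/16$ in the error term to be an artifact of how many times one loses a factor $X^\epsilon$ while splitting into dyadic ranges and applying the sieve bound $\mu^2(b)$, so it is not optimized but merely positive.
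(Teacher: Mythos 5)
Your outline of the diagonal/off-diagonal structure reproduces, in spirit, only the \emph{small-$b$} part of the paper's argument (the analogue of Proposition \ref{prop1}: Fourier expansion of $\psi$, oscillatory-integral truncation to $|n_1/b_1^{3/2}-n_2/b_2^{3/2}|\le X^{-1/2+\epsilon}$, and a spacing count of near-coincidences, plus an asymptotic evaluation of the diagonal sum as in Proposition \ref{prop3}). The genuine gap is that this counting strategy cannot be run over the whole range of $b$, and it is not where the exponent $\tfrac{189}{1046}$ comes from. Quantitatively: with $b_1,b_2\sim B$ the near-coincidence count weighted as in the paper contributes a term of size $B^{7/2}/X^{1/2-\epsilon}$ (cf.\ \eqref{bound3}), which already exceeds the target $H^{2/3}$ once $B$ is much larger than $H^{4/21}X^{1/7}$; since $b$ ranges up to about $X^{1/3}$, a single harmonic-analysis/counting treatment, even with the best exponent pairs inserted into the coincidence count, collapses in the medium range of $b$. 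The missing idea is the three-way splitting of $b$ and, crucially, the treatment of $H^{2/3+\epsilon}<b\le X^{1/3}/H^{\lambda}$ by an entirely different mechanism borrowed from the squarefree variance work: reduce $J_{2,B}$ via the Saffari--Vaughan trick and Plancherel to a mean value of $\zeta(\tfrac12+2it)M(\tfrac34+3it)$ with $M(s)=\sum_{B<b\le 2B}\mu^2(b)b^{-s}$, and then combine the mean-value and Huxley large-value theorems for Dirichlet polynomials, the fourth moment of $\zeta$, Bourgain's subconvexity exponent $13/84$, and a pointwise bound for $M(\tfrac34+it)$ obtained from van der Corput's Process B with the exponent pair $(13/84,55/84)$ (Lemma \ref{lem-M}). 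The threshold $\tfrac{189}{1046}$ is produced by balancing these inputs (the choice $\delta=44/189-3\epsilon$ in Section \ref{sec-V}), not by optimizing the off-diagonal exponential-sum count you describe; the large-$b$ tail $b>X^{1/3}/H^{\lambda}$ is disposed of trivially. Without this Dirichlet-polynomial/zeta-moment component your plan would at best recover a much shorter range of $H$ (the counting part alone is limited to $H\le X^{3/10}$ only for $b\le H^{2/3+\epsilon}$, and fails outright for medium $b$), so the stated theorem would not follow.
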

Conditionally, we have
\begin{thm} \label{thm3}
Conjecture \ref{conj1} is true under the RH or the LH.
\end{thm}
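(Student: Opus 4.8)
The plan is to refine the argument of \cite{C}, where Conjecture \ref{conj1} was already proved under RH or LH in the shorter range $X^\epsilon \le H \le X^{3/14 - \epsilon}$; the point is to carry the off-diagonal estimate all the way to the endpoint $H = X^{1/4}$, which is exactly where the full strength of RH or LH (rather than the partial inputs used in \cite{C}) is required.

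\emph{Step 1: Reduction to a mean square of sawtooth sums.} Writing a squarefull number as $a^2 b^3$ with $b$ squarefree and using that the choice $y = 2\sqrt x\,H + H^2$ makes $\sqrt{x+y} = \sqrt x + H$ exactly, one gets, with $u = \sqrt x$, $\psi(t) = \{t\} - \tfrac12$, and a truncation parameter $Y = H^{2/3} X^{C\epsilon}$,
\[
Q(x+y) - Q(x) - \frac{\zeta(3/2)}{\zeta(3)} H \;=\; -\sum_{b \le Y} \mu^2(b)\Bigl(\psi\bigl(\tfrac{u+H}{b^{3/2}}\bigr) - \psi\bigl(\tfrac{u}{b^{3/2}}\bigr)\Bigr) + \mathcal E(x),
\]
where $\mathcal E(x)$ gathers the contribution of $b > Y$ --- in particular the secondary main term of size $\asymp H X^{-1/6}$ coming from $\tfrac{\zeta(2/3)}{\zeta(2)}(\sqrt[3]{x+y} - \sqrt[3]{x})$ --- together with lower order terms. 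A direct estimate gives $\tfrac1X\int_X^{2X}|\mathcal E(x)|^2\,dx \ll H^2 X^{-1/3} + H^{2/3} X^{-\epsilon/2}$, which is $o(H^{2/3})$ precisely when $H \le X^{1/4 - \epsilon}$; this is what pins down the exponent $1/4$. After a Cauchy--Schwarz bound for the resulting cross term, it remains to evaluate $\tfrac1X\int_{\sqrt X}^{\sqrt{2X}} \bigl| \sum_{b \le Y} \mu^2(b)(\psi(\tfrac{u+H}{b^{3/2}}) - \psi(\tfrac{u}{b^{3/2}})) \bigr|^2 \, 2u\,du$.

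\emph{Step 2: Vaaler expansion and the diagonal main term.} Replace each $\psi$ by a finite piece $\psi_M(t) = \sum_{1 \le |m| \le M} c_m e(mt)$, $c_m \ll |m|^{-1}$, of its Fourier series, with $M$ a fixed power of $H$ and the error controlled by the usual nonnegative majorant, and expand the square. Each term then carries the factor $\tfrac1X\int_{\sqrt X}^{\sqrt{2X}} e(u\alpha)\,2u\,du$ with $\alpha = \tfrac{m_1}{b_1^{3/2}} - \tfrac{m_2}{b_2^{3/2}}$, equal to $1$ when $\alpha = 0$ and $\ll \sqrt X/|\alpha|$ otherwise. Since $b_1, b_2$ are squarefree, $\alpha = 0$ forces $b_1 = b_2$ and $m_1 = m_2$, and the resulting diagonal equals (as $M \to \infty$) exactly $\sum_{b \le Y} \mu^2(b)\,\rho(H/b^{3/2})$ with $\rho(t) = \{t\}(1-\{t\}) = \sum_{m \ne 0}\frac{|e(mt)-1|^2}{4\pi^2 m^2}$. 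A Mellin transform gives $\sum_{b \le Y}\mu^2(b)\rho(H/b^{3/2}) = \tfrac1{2\pi i}\int_{(c)}\widetilde\rho(w)\,H^w\,\tfrac{\zeta(3w/2)}{\zeta(3w)}\,dw + O(H Y^{-1/2})$ for $\tfrac23 < c < 1$, where $\widetilde\rho(w) = \int_0^\infty \rho(t)t^{-w-1}\,dt$; moving the contour left past the simple pole of $\zeta(3w/2)$ at $w = \tfrac23$ produces the residue $\tfrac{2\widetilde\rho(2/3)}{3\zeta(2)}H^{2/3}$, and the Parseval identity $\widetilde\rho(2/3) = \int_0^\infty \rho(t)t^{-5/3}\,dt = 2\zeta(4/3)\int_0^\infty S(t)^2 t^{1/3}\,dt$ turns this into the claimed $\tfrac{4\zeta(4/3)}{3\zeta(2)}\int_0^\infty S(t)^2 t^{1/3}\,dt\cdot H^{2/3}$, while the shifted integral is $O(H^{1/2+\epsilon})$.

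\emph{Step 3: The off-diagonal --- the main obstacle.} It remains to show that the off-diagonal, i.e.\ the quadruples with $\alpha \ne 0$, contributes $o(H^{2/3})$. Using $\tfrac1{|\alpha|} = \tfrac{(b_1 b_2)^{3/2}(|m_1| b_2^{3/2} + |m_2| b_1^{3/2})}{|m_1^2 b_2^3 - m_2^2 b_1^3|}$ with $k := m_1^2 b_2^3 - m_2^2 b_1^3$ a nonzero integer, and grouping by the dyadic size of $|k|$, this reduces to bounding, for each such range, a weighted count of solutions of $0 < |m_1^2 b_2^3 - m_2^2 b_1^3| \le K$ in squarefree $b_i \le Y$ and $|m_i| \le M$, with weights $\mu^2(b_1)\mu^2(b_2)\,|m_1 m_2|^{-1}\prod_i \min(1, |m_i| H b_i^{-3/2})$. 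This Diophantine/exponential-sum count is the technical heart of the whole matter, and it is the unconditional bounds here that cap Theorem \ref{thm2} at the exponent $\tfrac{189}{1046}$. To reach the full range $H \le X^{1/4 - \epsilon}$, one carries out a further Poisson/van der Corput transformation in the variables $b_1, b_2$, reducing the count to mean values of certain Dirichlet polynomials attached to $\mu^2$; at this step one invokes RH or LH --- concretely, the bound $\zeta(\tfrac12 + it) \ll_\epsilon |t|^\epsilon$, valid under either hypothesis, together with its consequences for moments of $\zeta$ and for counts of squarefree integers --- to obtain essentially square-root cancellation where the unconditional methods lose, yielding an off-diagonal bound of $O(H^{2/3 - \delta})$ for a fixed $\delta > 0$ over the whole range. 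Assembling Steps 1--3 gives Conjecture \ref{conj1}. The single hardest point is making the solution count of Step 3 sharp near $H = X^{1/4}$, and it is precisely there that RH or LH is indispensable.
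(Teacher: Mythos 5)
Your Step 1 contains the gap that sinks the argument. You truncate the $b$-sum at $Y = H^{2/3}X^{C\epsilon}$ and assert that everything with $b > Y$ goes into an error $\mathcal{E}(x)$ with $\frac1X\int_X^{2X}|\mathcal{E}(x)|^2dx \ll H^2X^{-1/3} + H^{2/3}X^{-\epsilon/2}$ ``by a direct estimate.'' The term $H^2X^{-1/3}$ correctly accounts for the secondary main term (and for the truly large $b > X^{1/3}$, where $a=1$ is forced), and it is indeed why $1/4$ is the right exponent. But the intermediate range $H^{2/3+\epsilon} < b \le X^{1/3}$ cannot be dismissed this way: for a typical $x$ the interval contains about $\sum_{Y < b \le X^{1/3}} H b^{-3/2} \asymp H^{2/3}$ squarefull numbers with $b$ in this range, so a direct estimate of the mean square gives something of size $H^{4/3}$, not $o(H^{2/3})$; showing that the \emph{fluctuation} of this count has mean square $\ll H^{2/3-\epsilon/8}$ is precisely the content of the paper's Proposition \ref{prop2}, and it is exactly the place where the hypothesis enters. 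In the paper this range is treated by writing the count as a contour integral, reducing via Plancherel to $\frac1T\int_{-T}^{T}|\zeta(\frac12+2it)M(\frac34+3it)|^2dt$ with $M(s)=\sum_{B<b\le 2B}\mu^2(b)b^{-s}$, and then using RH/LH in the form $\zeta(\frac12+it)\ll_\epsilon|t|^\epsilon$ together with mean-value and large-value theorems for $M$; unconditionally one only gets this for $H \le X^{189/1046-2\epsilon}$, which is why Theorem \ref{thm2} stops there. If your ``direct estimate'' were valid, Theorem \ref{thm3} would follow unconditionally, contradicting the structure of the paper.

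Relatedly, your Step 3 misplaces where RH/LH is used. With $b_1,b_2 \le Y \asymp H^{2/3+\epsilon}$, the off-diagonal Diophantine count $0<|m_1^2b_2^3-m_2^2b_1^3|\le K$ is handled in the paper completely elementarily (spacing of Farey-type fractions, Proposition \ref{prop1}), unconditionally and in the wider range $H\le X^{3/10-3\epsilon}$; no zeta-function input is needed there, and it is not clear how a bound like $\zeta(\frac12+it)\ll|t|^\epsilon$ would even act on that counting problem --- your ``further Poisson/van der Corput transformation ... at this step one invokes RH or LH'' is not an argument. Your Step 2 diagonal evaluation (the identity $\sum_{m\neq 0}|e(m\delta)-1|^2/(4\pi^2m^2)=\{\delta\}(1-\{\delta\})$, the Mellin transform with $\zeta(3w/2)/\zeta(3w)$, and the residue at $w=2/3$) does match the paper's Proposition \ref{prop3} in substance. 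But as it stands the proposal omits the actual conditional ingredient (the Dirichlet-polynomial/zeta treatment of $H^{2/3+\epsilon}<b\le X^{1/3}$) and attributes the hypothesis to a step that does not need it, so it does not prove the theorem.
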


To prove Theorem \ref{thm2}, we split the squarefull numbers into three ranges according to the size of $b$. For small $b$, we apply smooth weights, count by studying gaps between fractions, and obtain
\begin{prop} \label{prop1}
Given $0 < \epsilon < 0.01$. For $X^\epsilon \le H \le X^{3/10 - 3 \epsilon}$, we have
\begin{align*}
\frac{1}{X} \int_{X}^{2X} & \bigg| \mathop{\sum_{x < a^2 b^3 \le x + y}}_{b \le H^{2/3 + \epsilon}} \mu^2(b) - H \sum_{b \le H^{2/3 + \epsilon}} \frac{\mu^2(b)}{b^{3/2}} \biggr|^2 dx \\
=& (1 + O(X^{-\epsilon /2})) 2 H^2  \sum_{b \le H^{2/3 + \epsilon}} \frac{\mu^2(b)}{b^3} \sum_{n \ge 1} S\Bigl( \frac{n H}{b^{3/2}} \Bigr)^2 + O_\epsilon ( H^{2/3 - \epsilon / 2} )
\end{align*}
where $S(x) = \frac{\sin \pi x}{\pi x}$.
\end{prop}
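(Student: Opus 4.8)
The plan is to pass to the variable $t=\sqrt x$ and reduce to a sawtooth (equivalently, a second–moment/gaps) computation. Since $\sqrt{x+y}=\sqrt x+H$ by our choice of $y$, for each squarefree $b$ one has
\[
\sum_{x<a^2b^3\le x+y}1=\#\Bigl\{a\in\mathbb Z:\ \frac{\sqrt x}{b^{3/2}}<a\le\frac{\sqrt x+H}{b^{3/2}}\Bigr\}=\frac{H}{b^{3/2}}+\psi\Bigl(\frac{\sqrt x}{b^{3/2}}\Bigr)-\psi\Bigl(\frac{\sqrt x+H}{b^{3/2}}\Bigr),
\]
where $\psi(u)=\{u\}-\tfrac12$ is the sawtooth function. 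Hence, with $B=H^{2/3+\epsilon}$, the left side of the proposition equals $\frac1X\int_X^{2X}\bigl|\sum_{b\le B}\mu^2(b)\,g_b(\sqrt x)\bigr|^2dx$, where $g_b(t)=\psi(t/b^{3/2})-\psi((t+H)/b^{3/2})$.

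Next, writing $e(u):=e^{2\pi i u}$, I would insert Vaaler's trigonometric polynomial approximation $\psi=\psi_M+r_M$ of degree $M$ (for a suitably chosen $M$), whose nonzero Fourier coefficients are $\widehat{\psi_M}(n)=-\tfrac1{2\pi i n}\bigl(1-\tfrac{|n|}{M+1}\bigr)$ for $1\le|n|\le M$, keeping the error $r_M$ to be handled in mean square by the same counting used below. The key algebraic point is that $1-e(nH/b^{3/2})=-2i\sin(\pi nH/b^{3/2})\,e(nH/2b^{3/2})$, which yields
\[
\psi_M\Bigl(\frac t{b^{3/2}}\Bigr)-\psi_M\Bigl(\frac{t+H}{b^{3/2}}\Bigr)=\frac H{b^{3/2}}\sum_{1\le|n|\le M}\Bigl(1-\frac{|n|}{M+1}\Bigr)S\Bigl(\frac{nH}{b^{3/2}}\Bigr)e\Bigl(\frac{n(t+H/2)}{b^{3/2}}\Bigr);
\]
this is exactly where the factor $S(x)=\tfrac{\sin\pi x}{\pi x}$ of the statement enters. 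The "smooth weights'' here are the Fej\'er weights $1-|n|/(M+1)$, and also a smoothing of the cut‑off $b\le B$, which is where the multiplicative factor $1+O(X^{-\epsilon/2})$ will come from.

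Then I would expand the square and integrate in $x$ (equivalently substitute $t=\sqrt x$, $dx=2t\,dt$ with $t\asymp\sqrt X$). The diagonal contribution, from $b_1=b_2=b$ and $n_1=-n_2=n$, has phase $e(0)=1$, so $\frac1X\int_X^{2X}1\,dx=1$; after discarding the Fej\'er weights and completing the sum over $n$ (both at a cost $O_\epsilon(B/M)$, using $|S(nH/b^{3/2})|\le\min(1,\tfrac{b^{3/2}}{\pi nH})$ to control the tail) this is precisely the main term $2H^2\sum_{b\le B}\tfrac{\mu^2(b)}{b^3}\sum_{n\ge1}S(nH/b^{3/2})^2$. (It may be worth recording that this equals $\sum_{b\le B}\mu^2(b)\,\{H/b^{3/2}\}\bigl(1-\{H/b^{3/2}\}\bigr)$, which is what a direct second–moment count for a single $b$ produces when one counts pairs of integers landing in the same window $(\sqrt x/b^{3/2},(\sqrt x+H)/b^{3/2}]$ by studying the gaps between consecutive $ab^{3/2}$.) Choosing $M$ an appropriate small power of $X$ makes the cost $O_\epsilon(B/M)\ll_\epsilon H^{2/3-\epsilon/2}$.

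The remaining off–diagonal terms have $\alpha:=n_1/b_1^{3/2}+n_2/b_2^{3/2}\ne0$, and for these $\frac1X\int_X^{2X}e\bigl((\sqrt x+H/2)\alpha\bigr)dx\ll(\sqrt X\,|\alpha|)^{-1}$ by integration by parts. The essential spacing input is that, writing $\alpha=(n_1b_2^{3/2}+n_2b_1^{3/2})/(b_1b_2)^{3/2}$ and multiplying the numerator by its conjugate, $\bigl|n_1b_2^{3/2}+n_2b_1^{3/2}\bigr|\cdot\bigl|n_1b_2^{3/2}-n_2b_1^{3/2}\bigr|=\bigl|n_1^2b_2^3-n_2^2b_1^3\bigr|\ge1$ is a nonzero integer; equivalently, distinct fractions $n/b^{3/2}$ with $|n|\le M$, $b\le B$ are well separated. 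One must then sum
\[
\frac{H^2}{\sqrt X}\ \sum_{\substack{b_1,b_2\le B\\ 1\le|n_1|,|n_2|\le M\\ \alpha\ne0}}\ \frac{\bigl|S(n_1H/b_1^{3/2})\bigr|\,\bigl|S(n_2H/b_2^{3/2})\bigr|}{(b_1b_2)^{3/2}\,|\alpha|}
\]
and show it is $\ll_\epsilon H^{2/3-\epsilon/2}$, by balancing the spacing of the fractions $n/b^{3/2}$ (to bound how often $|\alpha|$ is small, and with what weight) against the decay $|S(nH/b^{3/2})|\ll\min(1,\tfrac{b^{3/2}}{nH})$; the same argument disposes of the $r_M$ term. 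I expect this off–diagonal estimate to be the main obstacle, and it is precisely this step that forces the restriction $B\le X^{1/5-2\epsilon}$, i.e.\ $H\le X^{3/10-3\epsilon}$.
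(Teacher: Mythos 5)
Your reduction is the same in outline as the paper's: pass to $\sqrt{x}$, write the count for each $b$ as $H/b^{3/2}$ plus a difference of sawtooths, expand $\psi$ in a (truncated) Fourier series so that the factor $S(nH/b^{3/2})$ appears, and read off the main term from the diagonal $b_1=b_2$, $n_1=-n_2$; your bookkeeping for completing the $n$-sum and removing the Fej\'er weights at cost $O(B\log X/M)$ is fine. The problem is that the proposition \emph{is} the off-diagonal estimate, and you have not carried it out: you display the sum to be bounded and state that you ``expect'' it to be the main obstacle and to force $H\le X^{3/10-3\epsilon}$. In the paper this step is essentially the entire proof: after the smooth weight and Lemma \ref{lem1} reduce matters (see \eqref{off}) to quadruples with $0<|n_1/b_1^{3/2}-n_2/b_2^{3/2}|\le X^{-(1/2-\epsilon)}$, these are counted via the gcd decomposition $n_i=nn_i'$, $b_i=bb_i'$ and the spacing of the fractions $n_1'/n_2'$ (resp.\ $b_1'/b_2'$), split according to whether $N_1N_2\ge H^{4/3+6\epsilon}$ or not (displays \eqref{bound3} and \eqref{bound4}); that is exactly where the exponent $3/10$ comes from. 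Leaving this out leaves the proof without content at the critical point.

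Moreover, the route you chose makes the missing step strictly harder than the paper's, and it is doubtful it reaches the stated range. You keep the sharp cutoff $\int_X^{2X}$ and use the first-derivative bound $\min\bigl(1,(\sqrt X\,|\alpha|)^{-1}\bigr)$, which is essentially best possible without smoothing; the paper instead smooths the $x$-integral with $\sigma_{K,X,L}^{\pm}$, so Lemma \ref{lem1} annihilates \emph{every} term with $|\alpha|>X^{-(1/2-\epsilon)}$, and only the tiny-$|\alpha|$ quadruples remain, carrying bounded weight and being few in number. In your version, quadruples with $|\alpha|$ at intermediate scales still enter with weight $1/(\sqrt X|\alpha|)$. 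Writing $|\alpha|=|n_1^2b_2^3-n_2^2b_1^3|\big/\bigl((b_1b_2)^{3/2}(|n_1|b_2^{3/2}+|n_2|b_1^{3/2})\bigr)$, a single solution of $|n_1^2b_2^3-n_2^2b_1^3|=k$ with $b_1,b_2\asymp B=H^{2/3+\epsilon}$, $|n_i|\asymp B^{3/2}/H$ and $k$ of moderate size (such solutions occur whenever $b_1/b_2$ is a good rational approximation to $(n_2/n_1)^{2/3}$, e.g.\ $k\approx n_1^2B$ from a convergent denominator) contributes roughly $H^{3}/(k\sqrt X)$ to your off-diagonal sum, which exceeds $H^{2/3-\epsilon/2}$ for $H$ near $X^{3/10}$; a crude multi-scale accounting suggests the unsmoothed argument caps out around $H\ll X^{3/14}$ unless additional cancellation over these sporadic near-coincidences is exploited. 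So you must either reinstate a smooth majorant/minorant in $x$ (note the factor $1+O(X^{-\epsilon/2})$ in the statement comes from $\int\sigma^{\pm}_{K,X,L}=X+O(X^{1-\epsilon/2})$, not from smoothing the cutoff $b\le B$ as you assert), or supply a genuinely stronger count of near-coincidences at all scales of $|\alpha|$; as written the decisive estimate is both absent and questionable.
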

\begin{prop} \label{prop3}
Given $0 < \epsilon < 0.01$. For $1 \le H \le X^{1/2}$, we have
\[
2 H^2 \sum_{b \le H^{2/3 + \epsilon}} \frac{\mu^2(b)}{b^3} \sum_{n \ge 1} S\Bigl( \frac{n H}{b^{3/2}} \Bigr)^2= H^{2/3} \cdot \frac{4 \zeta(4/3)}{3 \zeta(2)} \int_{0}^{\infty} S(y)^2 y^{1/3} dy + O_\epsilon ( H^{2/3 - \epsilon/6} )
\]
where $S(x) = \frac{\sin \pi x}{\pi x}$.
\end{prop}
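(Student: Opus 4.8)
The plan is to recast the double sum so that the summand in $b$ is an explicit, nonnegative, slowly varying function and then to replace the $b$-sum by an integral. The key observation is the algebraic simplification
\[
\frac{1}{b^{3}}\,S\!\left(\frac{nH}{b^{3/2}}\right)^{2}=\frac{\sin^{2}\!\bigl(\pi nHb^{-3/2}\bigr)}{\pi^{2}n^{2}H^{2}}=:f_{n}(b),
\]
in which the factor $b^{-3}$ cancels the denominator $(\pi nHb^{-3/2})^{2}$ of $S(\cdot)^{2}$. Since all terms are nonnegative I would interchange the two sums and write the left-hand side as $2H^{2}\sum_{n\ge1}\sum_{b\le B}\mu^{2}(b)f_{n}(b)$, where $B:=H^{2/3+\epsilon}$. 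As $f_{n}(b)\le 1/(\pi^{2}n^{2}H^{2})$, the tail over $n>N:=H^{2\epsilon}$ contributes $\ll 2H^{2}\sum_{n>N}B/(\pi^{2}n^{2}H^{2})\ll B/N=H^{2/3-\epsilon}$, so it suffices to treat $1\le n\le N$.

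For each such $n$ I would split the inner sum at $B_{1}:=H^{1/2}$. On $b\le B_{1}$ the trivial bound $\sum_{b\le B_{1}}\mu^{2}(b)f_{n}(b)\le B_{1}/(\pi^{2}n^{2}H^{2})$ is enough: weighted by $2H^{2}$ and summed over $n\le N$ it is $\ll B_{1}=H^{1/2}$. On $B_{1}<b\le B$ I would insert the classical estimate $M(t):=\sum_{b\le t}\mu^{2}(b)=t/\zeta(2)+E(t)$ with $E(t)\ll\sqrt t$ and integrate by parts against $f_{n}$, which is $C^{1}$ with $f_{n}'(t)=-\frac{3}{2\pi nH}t^{-5/2}\sin\!\bigl(2\pi nHt^{-3/2}\bigr)$, so $|f_{n}'(t)|\le\frac{3}{2\pi nH}t^{-5/2}$. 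This yields
\[
\sum_{B_{1}<b\le B}\mu^{2}(b)f_{n}(b)=\frac{1}{\zeta(2)}\int_{B_{1}}^{B}f_{n}(t)\,dt+O\!\left(\sqrt{B}\,f_{n}(B)+\sqrt{B_{1}}\,f_{n}(B_{1})+\int_{B_{1}}^{B}\sqrt t\,|f_{n}'(t)|\,dt\right),
\]
in which the error integral is $\ll(nHB_{1})^{-1}$. I would then complete the integral to $(0,\infty)$; using $f_{n}(t)\le\min\bigl(1/(\pi^{2}n^{2}H^{2}),\,t^{-3}\bigr)$ one has $\int_{0}^{B_{1}}f_{n}\ll B_{1}/(n^{2}H^{2})$ and $\int_{B}^{\infty}f_{n}\ll\min\bigl(B^{-2},(nH)^{-4/3}\bigr)$.

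The surviving main term is $\dfrac{2H^{2}}{\zeta(2)}\sum_{n\le N}\int_{0}^{\infty}f_{n}(t)\,dt$. The substitution $u=nHt^{-3/2}$ evaluates $\int_{0}^{\infty}f_{n}(t)\,dt=\frac{2}{3}(nH)^{-4/3}\int_{0}^{\infty}S(u)^{2}u^{1/3}\,du$, so extending the $n$-sum to all $n$ (the tail $\sum_{n>N}n^{-4/3}\ll N^{-1/3}$ costs $\ll H^{2/3}N^{-1/3}=H^{2/3-2\epsilon/3}$) produces exactly
\[
\frac{2H^{2}}{\zeta(2)}\cdot\frac{2}{3}H^{-4/3}\zeta(4/3)\int_{0}^{\infty}S(y)^{2}y^{1/3}\,dy=\frac{4\zeta(4/3)}{3\zeta(2)}\left(\int_{0}^{\infty}S(y)^{2}y^{1/3}\,dy\right)H^{2/3}.
\]
It then remains to verify that every accumulated error is $\ll_{\epsilon}H^{2/3-\epsilon/6}$. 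The contributions identified above are $\ll H^{1/2}$ (from $b\le B_{1}$ and from $\int_{0}^{B_{1}}$), $\ll H^{2/3-\epsilon/2}$ (from $\int_{B}^{\infty}$, on splitting according to whether $nH\le B^{3/2}$), $\ll H^{1/2}\log H$ (from $2H^{2}\sum_{n\le N}(nHB_{1})^{-1}$), $\ll H^{1/3+\epsilon/2}$ (from $2H^{2}\sum_{n\le N}\sqrt{B}\,f_{n}(B)$ and the smaller $B_{1}$-boundary term), and $\ll H^{2/3-\epsilon}+H^{2/3-2\epsilon/3}$ (from the two $n$-cutoffs); since $0<\epsilon<0.01$, each of these is below $H^{2/3-\epsilon/6}$ once $H$ exceeds a constant depending on $\epsilon$, and for $H=O_{\epsilon}(1)$ the asserted identity is trivial.

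I expect the only genuinely delicate point — and the one dictating the shape of the argument — to be the range of small $b$. There the by-parts error $\int_{B_{1}}^{B}\sqrt t\,|f_{n}'(t)|\,dt\ll(nHB_{1})^{-1}$ grows as $B_{1}\to0$, so one cannot carry the integral comparison all the way down to $b=1$; one must stop at some cutoff $B_{1}$ and dispose of $b\le B_{1}$ by the trivial bound, at cost $\ll B_{1}$. Weighted by $2H^{2}$ and summed over $n\le N$, the by-parts error is $\asymp H^{1+o(1)}/B_{1}$, which forces $B_{1}\gg H^{1/3+o(1)}$, whereas the trivial part forces $B_{1}\ll H^{2/3-\epsilon/6}$; this window is nonempty precisely because $\frac13<\frac23-\frac{\epsilon}{6}$, and any admissible choice such as $B_{1}=H^{1/2}$ works.
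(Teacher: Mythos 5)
Your proposal is correct, and it proves the proposition by a genuinely different, more elementary route than the paper. The paper first replaces $S$ by a truncated weight $w(y)=S(y)h(y/H^{\epsilon/4})$, completes the sum over \emph{all} squarefree $b$, and evaluates the resulting expression by Mellin inversion and contour integration, using $\sum_b \mu^2(b)b^{-s}=\zeta(s)/\zeta(2s)$ and a shift of the contour past $s=-1/3$ (justified by a pointwise bound on $\zeta(\tfrac12+it)$), the main term arising as the residue $\frac{4\zeta(4/3)}{3\zeta(2)}\hat g(-\tfrac{1}{6\pi i})H^{2/3}$; it then compares $w$ with $S$ to remove the smoothing. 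You instead work directly with the unsmoothed $S$: after the cancellation $b^{-3}S(nHb^{-3/2})^2=\pi^{-2}n^{-2}H^{-2}\sin^2(\pi nHb^{-3/2})$, you truncate in $n$, apply the elementary squarefree count $\sum_{b\le t}\mu^2(b)=t/\zeta(2)+O(\sqrt t)$ with partial summation on $H^{1/2}<b\le H^{2/3+\epsilon}$ (handling $b\le H^{1/2}$ trivially), and evaluate $\int_0^\infty f_n$ by the substitution $u=nHt^{-3/2}$, so that $\zeta(4/3)$ emerges from $\sum_n n^{-4/3}$ rather than from a pole; your error accounting is sound and in fact yields $O_\epsilon(H^{2/3-\epsilon/2})$, slightly better than the stated $O_\epsilon(H^{2/3-\epsilon/6})$. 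What the paper's Mellin approach buys is uniformity over a general class of weights $w$ (in the spirit of Lemma 8 of GMRR), which is why it smooths first; what your approach buys is a self-contained real-variable argument needing nothing beyond the $O(\sqrt t)$ error in the squarefree counting function, with the delicate point (the lower cutoff $B_1$, forced by the growth of the by-parts error as $B_1\to0$) correctly identified and resolved.
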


For medium size $b$, similar to \cite{GMRR}, we transform the counting problem into contour integrals over Dirichlet polynomials and $\zeta(s)$, and use mean-value and large-value theorems on Dirichlet polynomials as well as bounds on $\zeta(s)$ and exponential sum techniques to obtain
\begin{prop} \label{prop2}
Given $0 < \epsilon < 0.01$ and $\lambda = 2/9 - \epsilon / 3$. For $X^\epsilon \le H \le X^{\frac{189}{1046} - 2 \epsilon}$, we have
\[
\frac{1}{X} \int_{X}^{2X} \bigg| \mathop{\sum_{x < a^2 b^3 \le x + y}}_{H^{2/3 + \epsilon} < b \le X^{1/3} / H^{\lambda}} \mu^2(b) - H \sum_{H^{2/3 + \epsilon} < b \le X^{1/3} / H^{\lambda}} \frac{\mu^2(b)}{b^{3/2}} \biggr|^2 dx \ll H^{2/3 - \epsilon / 8}.
\]
\end{prop}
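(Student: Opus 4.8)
The plan is to expand the square and reduce the variance to a mean-value problem for a Dirichlet polynomial, following the strategy of Gorodetsky–Matomäki–Radziwiłł–Rodgers \cite{GMRR}. Writing $R(x) := \sum_{x < a^2b^3 \le x+y,\; B < b \le B'} \mu^2(b) - H\sum_{B < b \le B'} \mu^2(b)/b^{3/2}$ with $B = H^{2/3+\epsilon}$ and $B' = X^{1/3}/H^\lambda$, I would first pass from the interval count to a sum over $a$: for each fixed squarefree $b$ in the range, the condition $x < a^2 b^3 \le x+y$ becomes $a \in (\sqrt{x}/b^{3/2}, \sqrt{x+y}/b^{3/2}]$, an interval of length $\approx H/b^{3/2}$. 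Detecting $a$ by Perron/Mellin and summing over the squarefree $b$ via $\mu^2(b) = \sum_{d^2 \mid b}\mu(d)$, one expresses $R(x)$ (up to negligible error) as a contour integral $\frac{1}{2\pi i}\int \zeta(2s) F(s) \frac{(x+y)^s - x^s}{s}\, ds$ where $F(s)$ is a Dirichlet polynomial of length $\asymp B'$ built from the $b$-sum with the $\zeta(2s)$ removed (the $1/\zeta$ from squarefree detection partly cancelling), and then shift the contour to $\mathrm{Re}(s) = 1/2$. The term $(x+y)^s - x^s$ localizes $|t|$ to roughly $\sqrt{X}/y \asymp 1/H$ near the main term and contributes the oscillatory piece; the genuinely new contribution comes from $|t|$ in a dyadic range up to about $X/y \asymp \sqrt{X}/H$, which after Plancherel in $x$ over $[X,2X]$ leaves us needing
\[
\int_{T}^{2T} |\zeta(\tfrac12 + it)|^2\, |F(\tfrac12 + it)|^2\, \frac{dt}{1 + |t|H/\sqrt{X}} \ll H^{2/3 - \epsilon/8}\cdot \frac{X}{\text{(normalization)}}
\]
uniformly for $1 \ll T \ll \sqrt{X}/H^{1-\lambda}$ or so, where $F$ has length $N \asymp B' = X^{1/3}/H^\lambda$.

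The core estimate is thus a fourth-moment-type bound for $\zeta \cdot F$. I would handle it by splitting the $t$-range. For $t$ small (say $t \le X^{\delta}$ for suitable $\delta$), the factor $(x+y)^s - x^s \approx s y x^{s-1}$ and $\zeta(2s)$ is bounded, so one is reduced to a mean value of $|F(1/2+it)|^2$ over a short $t$-interval, controlled by the classical mean value theorem for Dirichlet polynomials ($\int_0^T |F(1/2+it)|^2 dt \ll (T + N)\sum |a_n|^2/n$), which for $N = X^{1/3}/H^\lambda$ and the relevant $T$ gives a power saving because $N$ is taken just small enough — this is exactly what pins down the constraints $\lambda = 2/9 - \epsilon/3$ and $H \le X^{189/1046 - 2\epsilon}$. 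For larger $t$, I would combine the subconvexity/Weyl-type bound $\zeta(1/2+it) \ll t^{\kappa+\epsilon}$ (with $\kappa$ the best known exponent, $13/84$ or the Bourgain value) with a large-values estimate for $F$: by Huxley–Jutila–Heath-Brown large-value theorems, the measure of $t \in [T,2T]$ with $|F(1/2+it)| > V$ is $\ll (G V^{-2} + G^3 N V^{-6} T^\epsilon)$ where $G = \sum|a_n|^2/n$, and feeding this into a dyadic decomposition over $V$ balances against the $\zeta$-bound. Exponential-sum techniques (van der Corput / exponent pairs applied to the smooth sum $\sum_a (a^2 b^3)^{-it}$ after the $b$-sum is treated) may be needed to beat the trivial bound on the diagonal-free part when $b$ is near the top of its range; this is the source of the factor $H^\lambda$ saving in the cutoff $B' = X^{1/3}/H^\lambda$.

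The main obstacle I anticipate is the bookkeeping of the two competing constraints: the Dirichlet polynomial $F$ must be short enough that its mean value over the full $t$-range $|t| \lesssim \sqrt{X}/H$ beats $H^{2/3}$ (forcing $N = X^{1/3}/H^\lambda$ small, i.e. $\lambda$ large), yet the leftover range $X^{1/3}/H^\lambda < b \le X^{1/3}/H^{2/3+O(\epsilon)}$ — which Proposition \ref{prop2} does \emph{not} cover and must instead be absorbed into the large-$b$ analysis elsewhere in the paper — must not be so wide that it cannot be handled by the complementary argument. Optimizing $\lambda$ against the large-value exponents and the subconvexity exponent $\kappa$, while keeping every error term below $H^{2/3 - \epsilon/8}$, is where the peculiar constant $189/1046$ emerges, and getting all the dyadic sums over $T$ and $V$ to close simultaneously will be the delicate part. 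A secondary technical point is justifying the contour shift and the truncation of the Perron integral with an error term that is genuinely $o(H^{2/3})$ on average over $x$, which requires a preliminary mean-square bound (essentially the $L^2$ version of \eqref{shortcount1}) to control the tails.
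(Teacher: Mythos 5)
Your blueprint is essentially the paper's: dyadic-in-$t$ reduction via a contour integral to a weighted mean square of $\zeta(\tfrac12+2it)$ times a Dirichlet polynomial over $b$, then the mean-value theorem for small $T$, and subconvexity plus Huxley-type large-value estimates and the fourth moment for larger $T$ (the paper also inserts a Saffari--Vaughan step after the substitution $u=\sqrt{x}$, and works with dyadic blocks $B<b\le 2B$, so the polynomial has length $B$ rather than $X^{1/3}/H^{\lambda}$; the block length $B$ is what the whole case analysis is indexed by). However, there is a genuine gap at the decisive step, and your account of where the exponents come from is not correct. First, $\lambda=2/9-\epsilon/3$ is not ``pinned down'' inside this proposition by the mean-value theorem applied to a polynomial of length $X^{1/3}/H^{\lambda}$: it is fixed beforehand by the trivial treatment of the range $b>X^{1/3}/H^{\lambda}$ (the bound \eqref{z-blarge2}, which needs $H^{3\lambda}\ll H^{2/3-\epsilon}$), and enters Proposition \ref{prop2} only as the upper endpoint for $B$. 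Second, and more importantly, the combination you describe (mean value $+$ fourth moment $+$ subconvexity $+$ large values) does \emph{not} close by itself. After those steps one is left with a residual window, roughly $B\asymp H^{4/3}$ and $T\asymp H^{2}$, in which the large-value count \eqref{largevalue} permits $V$ as large as the trivial bound $B^{1/4}$, and the resulting contribution $\ll H/B^{1/4}$ falls just short of $H^{2/3-\epsilon}$. The paper closes this window with an extra ingredient your proposal does not contain: a \emph{pointwise} bound for the $b$-polynomial $M(\tfrac34+3it)$ itself (Lemma \ref{lem-M}), proved by Van der Corput's Process B followed by Bourgain's exponent pair $(13/84+\epsilon,55/84+\epsilon)$, which shows $|M(\tfrac34+3it)|\ll H^{9\delta/4-29/42+4\epsilon}$ there and hence that the offending set $S(V)$ is empty once $\delta=44/189-3\epsilon$; it is exactly this step, balanced against $T\ge\sqrt{X}/H$, that produces $H\le X^{189/1046-2\epsilon}$.

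Your suggestion to apply van der Corput/exponent pairs to the $a$-sum $\sum_a (a^2b^3)^{-it}$ cannot substitute for this: in the contour framework the complete $a$-sum is exactly $\zeta(2s)$, so exponential-sum input on that factor is nothing more than the subconvexity bound for $\zeta(\tfrac12+2it)$, which is already being used at Bourgain's exponent; the missing saving has to come from the $b$-sum of length $B\asymp T^{2/3}$ at height $T$, which is precisely the Process-B regime exploited in Lemma \ref{lem-M}. Nor is the exponential-sum work ``the source of the factor $H^{\lambda}$ saving in the cutoff'': the cutoff saving is the trivial large-$b$ estimate, and the paper only remarks that Erd\H{o}s--Tur\'an--Koksma-type arguments could enlarge $\lambda$ slightly. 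A smaller point: the leftover range you worry about, written as $X^{1/3}/H^{\lambda}<b\le X^{1/3}/H^{2/3+O(\epsilon)}$, is empty as stated (since $\lambda<2/3$); the range genuinely outside Proposition \ref{prop2} is $b>X^{1/3}/H^{\lambda}$, already handled in the initial manipulations. So while your general strategy is the right one, as written the proof would stall precisely in the regime that dictates the exponent $189/1046$.
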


For large $b$, we simply estimate the situation trivially with a certain parameter $\lambda$. It is worth mentioning that one can use Erd\H{o}s-Tur\'{a}n-Koksma inequality on equidistribution and exponential sum techniques to obtain a slightly bigger value for $\lambda$. This can extend the range of $H$ for Theorem \ref{thm2} slightly. Also, the interested readers may apply our proof of Theorem \ref{thm2} to get a better upper bound (such as $H^{4/3 + \epsilon}$) for Theorem \ref{thm1}. With the resolution of Conjecture \ref{conj1} under RH or LH, the next step of this study would be to investigate the following more precise variance:
\[
\frac{1}{X} \int_{X}^{2 X} \Big| Q(x + y) - Q(x) - \frac{\zeta(3/2)}{\zeta(3)} (\sqrt{x + y} - \sqrt{x}) - \frac{\zeta(2/3)}{\zeta(2)} (\sqrt[3]{x+y} - \sqrt[3]{x}) \Big|^2 dx.
\]

The paper is organized as follows. First, we collect some needed lemmas. Then, we perform an initial manipulation on the variance and estimate the large $b$ contribution. Next, we deal with small $b$ and prove Proposition \ref{prop1}. After that, we study medium size $b$ and obtain Proposition \ref{prop2}. Then, we finish the proof of Theorem \ref{thm2} and indicate modifications of the argument to yield Theorem \ref{thm3}. At the end, we give a proof of Proposition \ref{prop3}.

\bigskip

{\bf Notation.} We use $[x]$, $\{x\}$ and $\| x \|$ to denote the integer part of $x$, the fractional part, and the distance between $x$ and the nearest integer respectively. For any set $\mathcal{S}$, $|\mathcal{S}|$ stands for its cardinality. The symbols $f(x) = O(g(x))$, $f(x) \ll g(x)$ and $g(x) \gg f(x)$ are equivalent to $|f(x)| \leq C g(x)$ for some constant $C > 0$. The symbol $f(x) \asymp g(x)$ means that $C_1 f(x) \le g(x) \le C_2 f(x)$ for some constants $0 < C_1 < C_2$. The symbol $f(x) \sim g(x)$ means that $\lim_{x \rightarrow \infty} f(x)/g(x) = 1$. Finally, $f(x) = O_{\kappa} (g(x))$, $f(x) \ll_{\kappa} g(x)$, $g(x) \gg_{\kappa} f(x)$ or $g(x) \asymp_{\kappa} f(x)$ mean that the implicit constant may depend on $\kappa$.
%--------------------------------------------------------------------------------------------------------
\section{Some lemmas}

We need a lemma on cancellation of oscillatory integral over certain smooth function.
\begin{lem} \label{lem1}
Suppose $K > 1$ is some large integer. Let $H \ge 1$, $W, Q, L, R > 0$ satisfying $Q R / \sqrt{H} > 1$ and $R L > 1$. Suppose that $w(t)$ is a smooth function with support on $[\alpha, \beta]$ satisfying
\[
w^{(j)}(t) \ll_j W L^{-j} \; \; \text{ for } j = 0, 1, 2, ..., 2K-1.
\]
Suppose $h(t)$ is a smooth function on $[\alpha, \beta]$ such that $|h'(t)| \ge R$ and
\[
h^{(j)}(t) \ll_j H Q^{-j}, \; \; \text{ for } j = 2, 3, ..., 2K- 1.
\]
Then the integral
\[
I = \int_{-\infty}^{\infty} w(t) e^{i h(t)} dt
\]
satisfies
\[
I \ll_K (\beta - \alpha) W [ (Q R / \sqrt{H})^{-K} + (R L)^{-K} ].
\]
\end{lem}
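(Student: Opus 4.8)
The plan is to prove this by repeated integration by parts, combined at each stage with a dyadic-type analysis that keeps track of the size of $h'(t)$. The standard non-stationary phase principle says that each integration by parts in $\int w(t) e^{ih(t)}\,dt$ gains a factor of roughly $1/(R L)$ from differentiating $w$ and a factor of roughly $\sqrt{H}/(QR)$ from differentiating $1/h'$ (since the second derivative of $h$ is of size $H/Q^2$ and $h'$ is of size at least $R$, so $(1/h')' \ll (H/Q^2)/R^2$, and balancing this against the gain needs the hypothesis $QR/\sqrt H > 1$; more precisely the clean bookkeeping comes out with $QR/\sqrt H$ rather than $Q^2 R^2 / H$ because one tracks a weighted norm). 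The two hypotheses $QR/\sqrt H > 1$ and $RL > 1$ guarantee that both gains are genuine contractions, so iterating $K$ times yields the stated bound $(\beta-\alpha)W[(QR/\sqrt H)^{-K} + (RL)^{-K}]$, with the sum of the two terms arising because at each step we may lose via either mechanism.

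Concretely, first I would write $e^{ih(t)} = \frac{1}{i h'(t)} \frac{d}{dt} e^{ih(t)}$ and integrate by parts, with no boundary terms since $w$ is compactly supported in $(\alpha,\beta)$; this replaces $w(t)$ by $-\frac{d}{dt}\bigl(\frac{w(t)}{i h'(t)}\bigr) = -\frac{w'(t)}{i h'(t)} + \frac{w(t) h''(t)}{i h'(t)^2}$. I would then set up an inductive claim: after $j$ integrations by parts, $I = \int w_j(t) e^{ih(t)}\,dt$ where $w_j$ is supported on $[\alpha,\beta]$ and satisfies derivative bounds of the shape $w_j^{(\ell)}(t) \ll_{j,\ell} W\, M^{-j} L^{-\ell}$ for $0 \le \ell \le 2K-1-j$, with $M^{-1} := \max\{(RL)^{-1}, (QR/\sqrt H)^{-1}\}$ playing the role of the per-step gain. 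The inductive step uses the product and quotient rules on $\frac{w_j}{h'}$ together with the hypotheses $h^{(j)} \ll_j H Q^{-j}$ and $|h'| \ge R$; one checks that every term produced carries at least one extra factor from the list $\{1/(RL),\ \sqrt H/(QR)\}$ (the first when a derivative hits $w_j$, the second — after the appropriate combinatorial accounting of how derivatives distribute over powers of $h'$ and the higher $h^{(k)}$ — when a derivative hits $1/h'$). After $K$ steps, $I \ll_K \int |w_K(t)|\,dt \ll_K (\beta-\alpha) W M^{-K}$, and $M^{-K} \le (RL)^{-K} + (QR/\sqrt H)^{-K}$ gives the claim.

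The main obstacle is the bookkeeping in the inductive step: verifying that the quotient-rule expansion of $\frac{d}{dt}\bigl(\frac{w_j}{h'}\bigr)$ — and more generally the Faà di Bruno-type expansion after several steps — never produces a term that fails to absorb a full factor of $M^{-1}$, and in particular that the pairing of $H/Q^2$-sized second derivatives of $h$ against $R^2$-sized powers of $h'$ organizes into powers of $\sqrt H/(QR)$ rather than some weaker combination. This is where the slightly unusual exponent (a single $\sqrt H$ rather than $H$, and $Q$ to the first power) must be respected: the correct invariant to carry through the induction is not the sup-norm of $w_j$ alone but a gauge that weights the $\ell$-th derivative by $(Q/\sqrt H)^{\,?}$ in addition to $L^{-\ell}$ — choosing this gauge correctly is the crux, after which the estimate is routine. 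One also needs $K$ large enough that all invoked derivative bounds on $w$ (up to order $2K-1$) and on $h$ (orders $2$ through $2K-1$) are available, which is exactly the hypothesis's range, so no issue arises there.
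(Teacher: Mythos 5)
Your overall strategy (iterate integration by parts $K$ times, no boundary terms, and track how each step gains either $1/(RL)$ or $\sqrt{H}/(QR)$) is the right one: the paper itself does not prove the lemma but cites Lemma 8.1 of Blomer--Khan--Young (and Lemma 1 of the author's earlier paper), and the proof of that cited lemma is precisely this iterated integration by parts. So in spirit you are following the same route as the source the paper relies on.

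However, as written the argument has a genuine gap, and it sits exactly where you flag "the crux". Your inductive invariant $w_j^{(\ell)} \ll_{j,\ell} W M^{-j} L^{-\ell}$ with $M^{-1}=\max\{(RL)^{-1},\sqrt{H}/(QR)\}$ is not true in general: when a derivative falls on an $h$-factor it produces a loss of $Q^{-1}$, not $L^{-1}$, and the lemma assumes no relation between $Q$ and $L$. For instance $w_1'$ contains the term $w\,h'''/(h')^2 \ll W H Q^{-3}R^{-2} = W\,(QR/\sqrt{H})^{-2}Q^{-1}$, which need not be $\ll W M^{-1}L^{-1}$ (take $L$ much larger than $Q$ with $RL$ and $QR/\sqrt{H}$ of moderate size). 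You acknowledge that the correct gauge must carry extra weights in powers of $Q/\sqrt{H}$, but you do not supply it, so the induction is not actually closed. The standard way to close it (and the way the cited lemma is proved) is to induct not on derivative norms of $w_j$ but on its algebraic form: $w_j$ is a linear combination, with $O_j(1)$ bounded coefficients, of terms $w^{(a)}\prod_{i=1}^{m}h^{(b_i)}/(h')^{j+m}$ with each $b_i\ge 2$ and $a+\sum_i b_i=j+m$. Each such term is $\ll W\,(RL)^{-a}\,H^{m}(QR)^{a-j-m}$; since $b_i\ge 2$ forces $m\le j-a$, the hypotheses $H\ge 1$ and $QR>\sqrt{H}$ give $H^{m}(QR)^{a-j-m}\le (QR/\sqrt{H})^{-(j-a)}$ (this is exactly where the single $\sqrt{H}$ and first power of $Q$ come from), whence $|w_K|\ll_K W[(RL)^{-K}+(QR/\sqrt{H})^{-K}]$ by summing over $0\le a\le K$, and integrating over the support of $w_K\subseteq[\alpha,\beta]$ gives the stated bound. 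With this bookkeeping substituted for your unproved gauge, your outline becomes a complete proof; without it, the key step is missing.
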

\begin{proof}
This is basically Lemma 8.1 in \cite{BKY} or Lemma 1 in \cite{C}.
\end{proof}

We also need results on mean value and large value of Dirichlet polynomials, fourth-moment and subconvexity bound of the Riemann zeta function, and Van der Corput's process B (truncated Poisson summation).

\begin{lem}[Mean-value theorem] \label{lem-meanvalue}
Let $D(s) = \sum_{n = 1}^{N} a_n n^{-s}$ be a Dirichlet polynomial. Then
\[
\int_{0}^{T} |D(i t)|^2 dt = (T + O(N)) \sum_{n = 1}^{N} |a_n|^2.
\]
\end{lem}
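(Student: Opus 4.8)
The plan is to expand the square, integrate termwise, isolate the diagonal, and control the off-diagonal by a Hilbert-type inequality of Montgomery--Vaughan. First I would write, for real $t$,
\[
|D(it)|^2 = \sum_{m=1}^{N}\sum_{n=1}^{N} a_m \overline{a_n}\,\Bigl(\frac{n}{m}\Bigr)^{it},
\]
and integrate over $t\in[0,T]$. The diagonal terms $m=n$ contribute exactly $T\sum_{n=1}^{N}|a_n|^2$. For $m\neq n$ one has
\[
\int_0^T \Bigl(\frac{n}{m}\Bigr)^{it}\,dt = \frac{e^{iT\log(n/m)}-1}{i\log(n/m)},
\]
so that $\int_0^T |D(it)|^2\,dt = T\sum_{n}|a_n|^2 + E$, where
\[
E \ll \Bigl|\sum_{m\neq n}\frac{a_m\overline{a_n}}{\log(n/m)}\Bigr| + \Bigl|\sum_{m\neq n}\frac{a_m\overline{a_n}\,e^{iT\log(n/m)}}{\log(n/m)}\Bigr|.
\]
For the second sum I would absorb the unimodular factors into the coefficients, writing $b_n := a_n e^{-iT\log n}$ so that $a_m\overline{a_n}e^{iT\log(n/m)} = b_m\overline{b_n}$ with $|b_n|=|a_n|$; thus both sums have the same shape $\sum_{m\neq n} c_m\overline{c_n}/(\log n-\log m)$ with $|c_n|=|a_n|$.

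The key input is the refined Hilbert inequality (Montgomery--Vaughan): for distinct reals $\lambda_1,\dots,\lambda_N$ with $\delta_n:=\min_{m\neq n}|\lambda_m-\lambda_n|$,
\[
\Bigl|\sum_{m\neq n}\frac{c_m\overline{c_n}}{\lambda_m-\lambda_n}\Bigr| \ll \sum_{n=1}^{N}\frac{|c_n|^2}{\delta_n}.
\]
Applying this with $\lambda_n=\log n$, and noting that for $1\le n\le N$ we have $\delta_n = \min\bigl\{\log\tfrac{n+1}{n},\,\log\tfrac{n}{n-1}\bigr\}\gg \tfrac1n \gg \tfrac1N$, gives $\sum_n |c_n|^2/\delta_n \ll N\sum_n|a_n|^2$. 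Hence $E\ll N\sum_n|a_n|^2$, and combining with the diagonal term yields $\int_0^T|D(it)|^2\,dt = (T+O(N))\sum_{n=1}^N|a_n|^2$, as claimed. (Alternatively one could smooth the characteristic function of $[0,T]$ and invoke Gallagher's lemma, but the route above is the most direct.)

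The only delicate point is obtaining the sharp error term $O(N)$ rather than the $O(N\log N)$ that falls out of the cruder bound $|\log(n/m)|\gg |n-m|/N$ combined with $|a_m\overline{a_n}|\le \tfrac12(|a_m|^2+|a_n|^2)$ and the harmonic sum $\sum_{k\le N}1/k$. Replacing the global minimal gap by the local gaps $\delta_n$ in Hilbert's inequality is exactly what removes the spurious logarithm, so in practice the main step is simply to invoke the Montgomery--Vaughan inequality in its sharp form; everything else is bookkeeping.
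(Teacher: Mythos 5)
Your proof is correct and is essentially the standard argument behind this lemma: the paper gives no proof of its own, citing Montgomery's \emph{Ten Lectures} (Chapter 7), where the result is obtained exactly this way, by expanding the square and bounding the off-diagonal terms with the Montgomery--Vaughan form of Hilbert's inequality with $\lambda_n = \log n$ and local gaps $\delta_n \gg 1/n$. Your handling of the $T$-dependent off-diagonal piece by absorbing the unimodular factors into the coefficients is also the standard device, so there is nothing to add.
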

\begin{proof}
See \cite[Chapter 7]{M} for example.
\end{proof}

\begin{lem}[Large-value theorem] \label{lem-largevalue}
Let $N, T \ge 1$ and $V > 0$. Let $F(s) = \sum_{n \le N} a_n n^{-s}$ be a Dirichlet polynomial and let $G = \sum_{n \le N} |a_n|^2$. Let $\mathcal{T}$ be a set of $1$-spaced points $t_r \in [-T, T]$ such that $|F(i t_r)| \ge V$. Then
\[
|\mathcal{T}| \ll (G N V^{-2} + T \min \{ G V^{-2}, G^3 N V^{-6} \} ) (\log 2 NT)^6.
\]
\end{lem}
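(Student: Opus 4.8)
\emph{Proof plan.} This is the classical Hal\'{a}sz--Montgomery large value theorem, and I would reproduce the standard argument, which produces two upper bounds for $|\mathcal{T}|$ whose combination is the stated estimate. Write $R=|\mathcal{T}|$ and $L=\log 2NT$, and note $F'(s)=-\sum_{n\le N}a_n(\log n)n^{-s}$.

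First I would establish $R\ll (N+T)GV^{-2}L^{2}$. Since the $t_r$ are $1$-spaced, a Gallagher-type inequality (the fundamental theorem of calculus plus Cauchy--Schwarz) gives
\[
\sum_{r}|F(it_r)|^{2}\ll \int_{-T-1}^{T+1}|F(it)|^{2}\,dt+\Big(\int_{-T-1}^{T+1}|F(it)|^{2}\,dt\Big)^{1/2}\Big(\int_{-T-1}^{T+1}|F'(it)|^{2}\,dt\Big)^{1/2}.
\]
Applying Lemma \ref{lem-meanvalue} to $F$ and to $F'$ and using $\sum_{n\le N}|a_n|^{2}(\log n)^{2}\ll GL^{2}$, each integral is $\ll (T+N)GL^{2}$, so $RV^{2}\le\sum_{r}|F(it_r)|^{2}\ll (T+N)GL^{2}$. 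This supplies the $GNV^{-2}$ term and the $TGV^{-2}$ branch of the minimum.

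Next, and this is the crux, I would establish $R\ll GNV^{-2}L^{2}+G^{3}NTV^{-6}L^{6}$, which supplies the $TG^{3}NV^{-6}$ branch (it is sharper exactly when $V$ is large). I would run the Hal\'{a}sz--Montgomery duality argument: put $\xi_r=\overline{F(it_r)}$, so $\Xi:=\sum_r|\xi_r|^{2}=\sum_r|F(it_r)|^{2}\ge RV^{2}$, and expand and apply Cauchy--Schwarz in $n$:
\[
\Xi=\sum_{n\le N}a_n\sum_r\xi_r n^{-it_r}\ \Longrightarrow\ \Xi^{2}\le G\sum_{n\le N}\Big|\sum_r\xi_r n^{-it_r}\Big|^{2}=G\Big(N\Xi+\sum_{r\ne r'}\xi_r\overline{\xi_{r'}}\,D(t_r-t_{r'})\Big),
\]
where $D(u)=\sum_{n\le N}n^{-iu}$. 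Either $\Xi\ll GN$, or $\Xi^{2}\ll G\,\big|\sum_{r\ne r'}\xi_r\overline{\xi_{r'}}D(t_r-t_{r'})\big|$. For the off-diagonal I would use $D(u)\ll N$ and $D(u)\ll N/|u|+|u|^{1/2}L$ for $|u|\ge 1$ (van der Corput's first and second derivative tests, applied dyadically in $n$) together with the $1$-spacing of the $t_r$; estimating $\sum_{r'\ne r}|\xi_{r'}|\,|D(t_r-t_{r'})|$ by Cauchy--Schwarz and a second call to the mean-value theorem, and optimising the split over a cutoff parameter, yields the $G^{3}NTV^{-6}L^{6}$ term. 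Taking the minimum of the two bounds and writing the common $GNV^{-2}$ term once gives $R\ll\big(GNV^{-2}+T\min\{GV^{-2},\,G^{3}NV^{-6}\}\big)L^{6}$, as claimed.

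The step I expect to be the main obstacle is the off-diagonal estimate in Step 2: naive termwise bounds on $D(u)$ lose powers of $N$, so one must use the cancellation in $\sum_{n\le N}n^{-iu}$ and the spacing of the $t_r$ in tandem --- effectively via a second mean-value estimate --- to reach the clean exponent $V^{-6}$ with only a single factor of $N$.
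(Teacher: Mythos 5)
For the record, the paper does not prove this lemma at all: it is quoted as a combination of the discrete mean-value theorem and Huxley's large-value theorem (Iwaniec--Kowalski, Theorem 9.7 and Corollary 9.9), so any complete argument here is "a different route" by default. Your Step 1 is correct and standard: the Gallagher/Sobolev inequality for $1$-spaced points plus Lemma \ref{lem-meanvalue} applied to $F$ and $F'$ gives $RV^2\ll (T+N)G\log^2(2NT)$, which accounts for the $GNV^{-2}$ term and the $TGV^{-2}$ branch of the minimum.

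Step 2, however, has a genuine gap: neither of the two mechanisms you describe produces the exponent $V^{-6}$. If, after the duality/Cauchy--Schwarz step, you bound the off-diagonal pointwise by $D(u)\ll N/|u|+|u|^{1/2}L$ and use the $1$-spacing over the whole interval $[-T,T]$, you get $RV^2\ll G\bigl(NL+RT^{1/2}L\bigr)$, which yields $R\ll GNV^{-2}L$ \emph{only under the side condition} $V^2\gg GT^{1/2}L$ and gives nothing otherwise; this is the Hal\'asz--Montgomery theorem, not Huxley's. If instead you estimate $\sum_{r'\ne r}|\xi_{r'}||D(t_r-t_{r'})|$ by Cauchy--Schwarz and a second mean-value application to $\sum_{r'}|D(t_r-t_{r'})|^2$, you land on Montgomery's bound $R\ll \bigl(GNV^{-2}+G^2N TV^{-4}\bigr)\log^{O(1)}(2NT)$ (for $N\le T$), i.e.\ $V^{-4}$, which is weaker than what the lemma asserts when $V$ is large; and a dyadic cutoff in $|t_r-t_{r'}|$ at fixed $r$ only trades this for $G^2N^2V^{-4}$-type terms. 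The missing idea is Huxley's subdivision of the \emph{$t$-range}: split $[-T,T]$ into $O\bigl(1+TG^2L^2V^{-4}\bigr)$ windows of length $T_0\asymp V^4G^{-2}L^{-2}$, so that the Hal\'asz--Montgomery side condition $V^2\gg GT_0^{1/2}L$ holds in each window, apply the bound $R_j\ll GNV^{-2}L$ there, and sum over the windows to obtain $R\ll GNV^{-2}L+G^3NTV^{-6}L^3$. Your phrase ``optimising the split over a cutoff parameter'' may be gesturing at this, but as written the cutoff lives inside the off-diagonal sum rather than in the length of the $t$-window, and that version of the argument does not close. With the subdivision step inserted (or simply by citing Huxley's large-value theorem, as the paper does), the proof is complete.
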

\begin{proof}
This follows from the mean-value theorem and Huxley's large value theorem. See \cite[Theorem 9.7 and Corollary 9.9]{IK} for example.
\end{proof}

\begin{lem}[Fourth moment estimate] \label{lem-fourthmoment}
Let $T \ge 2$. Then
\[
\int_{-T}^{T} | \zeta( \frac{1}{2} + i t ) |^4 dt \ll T \log^4 T.
\]
\end{lem}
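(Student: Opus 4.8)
The plan is to run Ingham's classical argument: reduce the fourth moment to the mean square of a Dirichlet polynomial with divisor-function coefficients, and then apply Lemma~\ref{lem-meanvalue}. Since $\zeta(\tfrac12 - it) = \overline{\zeta(\tfrac12 + it)}$, we have $\int_{-T}^{T} |\zeta(\tfrac12 + it)|^4\,dt = 2\int_{0}^{T} |\zeta(\tfrac12 + it)|^4\,dt$, and because $\zeta$ is holomorphic on the critical line the range $0 \le t \le 1$ contributes only $O(1)$. It therefore suffices to prove $\int_{T_0}^{2T_0} |\zeta(\tfrac12 + it)|^4\,dt \ll T_0 \log^4 T_0$ for each dyadic $2 \le T_0 \le T$ and to sum the resulting geometric series over the $O(\log T)$ scales $T_0$, which is dominated by its largest term.

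Next I would invoke the approximate functional equation for $\zeta(s)^2 = \sum_{n} d(n) n^{-s}$, where $d$ is the divisor function: for $t \asymp T_0$ and $x = t/2\pi$,
\[
\zeta(\tfrac12 + it)^2 = \sum_{n \le x} \frac{d(n)}{n^{1/2 + it}} + \chi(\tfrac12 + it)^2 \sum_{n \le x} \frac{d(n)}{n^{1/2 - it}} + O\bigl( t^{-1/4} \log t \bigr),
\]
where $\zeta(s) = \chi(s)\zeta(1-s)$ and $|\chi(\tfrac12 + it)| = 1$. Since the second sum is the first one evaluated at $-t$ and $|\chi(\tfrac12 + it)^2| = 1$, this gives
\[
|\zeta(\tfrac12 + it)|^4 \ll \Bigl| \sum_{n \le x} \frac{d(n)}{n^{1/2 + it}} \Bigr|^2 + \Bigl| \sum_{n \le x} \frac{d(n)}{n^{1/2 - it}} \Bigr|^2 + \frac{\log^2 t}{\sqrt{t}} ,
\]
so after integration over $[T_0, 2T_0]$, and using the symmetry $t \mapsto -t$ to bound the two Dirichlet-polynomial terms by the same quantity, it remains to estimate $\int_{T_0}^{2T_0} \bigl| \sum_{n \le t/2\pi} d(n)\, n^{-1/2 - it} \bigr|^2\,dt$.

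The only subtlety is that the truncation $n \le t/2\pi$ depends on $t$. I would handle it by using a smoothed form of the approximate functional equation, in which the cutoff is a fixed smooth function $V$ evaluated at $2\pi n/t$; one then splits the smoothed sum into $O(\log T_0)$ dyadic blocks $M < n \le 2M$ with $M \ll T_0$ and removes the remaining smooth $t$-dependence of the weight in each block via a Mellin-transform argument. Each block thereby reduces, via Lemma~\ref{lem-meanvalue} with $N \asymp M$, to a bound of the shape $(2T_0 + O(M)) \sum_{M < n \le 2M} d(n)^2 / n \ll T_0 \sum_{M < n \le 2M} d(n)^2/n$. Summing over the $O(\log T_0)$ blocks and using $\sum_{n \le N} d(n)^2 \ll N \log^3 N$ (immediate from $\sum_n d(n)^2 n^{-s} = \zeta(s)^4/\zeta(2s)$) together with partial summation, so that $\sum_{n \le T_0} d(n)^2/n \ll \log^4 T_0$, yields $\int_{T_0}^{2T_0} \bigl| \sum_{n \le t/2\pi} d(n)\, n^{-1/2 - it} \bigr|^2\,dt \ll T_0 \log^4 T_0$. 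The $O(t^{-1/4}\log t)$ error contributes $\ll T_0^{3/4}\log^3 T_0$, both by itself and through cross terms (bounded by Cauchy--Schwarz against the main term), hence is negligible; combining and summing over $T_0$ completes the proof.

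I expect the main technical obstacle to be the honest bookkeeping around the approximate functional equation — neutralizing the $t$-dependent truncation and the error term without losing a power of $\log$ — but this is entirely standard; see \cite{IK} for a textbook treatment, or one may simply quote Ingham's theorem that $\int_0^T |\zeta(\tfrac12 + it)|^4\,dt \sim \tfrac{1}{2\pi^2} T \log^4 T$, of which the stated bound is a weak consequence.
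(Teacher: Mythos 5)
Your proposal is correct, and your closing remark in fact coincides with the paper's entire proof: the paper simply cites Titchmarsh \cite[formula (7.6.1)]{Tit}, i.e.\ Ingham's theorem $\int_0^T|\zeta(\tfrac12+it)|^4\,dt\sim \tfrac{1}{2\pi^2}T\log^4T$, of which the stated bound is an immediate consequence. The direct argument you sketch is the standard Ingham proof and is sound, with two bookkeeping caveats worth noting. First, the classical approximate functional equation for $\zeta^2$ with $x=y=t/2\pi$ has error $O(\log t)$ on the critical line (Titchmarsh, Theorem 4.2) rather than $O(t^{-1/4}\log t)$; this is harmless, since an $O(\log t)$ error contributes $\ll T_0\log^2 T_0$ directly and $\ll T_0\log^3 T_0$ through the cross terms. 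Second, recombining the $O(\log T_0)$ dyadic blocks by Cauchy--Schwarz would cost an extra factor $\log T_0$ and deliver only $T\log^5 T$; to retain $T\log^4 T$ you should apply Lemma~\ref{lem-meanvalue} once to the full polynomial of length $N\ll T_0$ (removing the smooth weight by a single Mellin integral against a rapidly decaying kernel before invoking the mean-value theorem), which gives $\ll T_0\sum_{n\ll T_0}d(n)^2/n\ll T_0\log^4T_0$ with no further loss.
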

\begin{proof}
See \cite[formula (7.6.1)]{Tit} for example.
\end{proof}

\begin{lem}[Subconvexity bounds] \label{lem-subconvex}
For any $\epsilon > 0$ and $|t| \ge 2$,
\[
\zeta(\frac{1}{2} + i t) \ll_\epsilon |t|^{13 / 84 + \epsilon} \ll |t|^{1/6} \log^2 |t|.
\]
\end{lem}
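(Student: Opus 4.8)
The plan is to treat the two displayed inequalities separately, since they have completely different depth. The weaker bound $\zeta(\tfrac12 + it) \ll |t|^{1/6}\log^2|t|$ is the classical Weyl-type estimate, which I would prove by hand via van der Corput's method; the stronger bound $\zeta(\tfrac12 + it) \ll_\epsilon |t|^{13/84 + \epsilon}$ is Bourgain's subconvexity result, which I would simply quote. The chaining ``$\ll_\epsilon |t|^{13/84+\epsilon} \ll |t|^{1/6}\log^2|t|$'' is then merely the numerical remark that $13/84 = 0.15476\ldots < 1/6 = 0.16666\ldots$, so, since the standing hypothesis forces $\epsilon < 0.01 < 1/84$, one has $|t|^{13/84+\epsilon} \le |t|^{1/6} \le |t|^{1/6}\log^2|t|$ for $|t| \ge 2$.

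For the classical bound I would start from a (possibly smoothed) approximate functional equation: for $|t| \ge 2$ there is $x \asymp \sqrt{|t|/2\pi}$ with $\zeta(\tfrac12 + it) = \sum_{n \le x} n^{-1/2 - it} + O(|t|^{-1/4})$, together with a dual sum controlled by the same method. Splitting into $O(\log|t|)$ dyadic blocks $\sum_{N < n \le 2N} n^{-1/2 - it}$ with $1 \le N \ll |t|^{1/2}$ and removing the monotone factor $n^{-1/2}$ by partial summation, one is reduced to bounding the pure exponential sum $\sum_{N < n \le M} n^{-it}$ for $N < M \le 2N$. Its phase $f(u) = -\tfrac{t}{2\pi}\log u$ obeys $|f^{(j)}(u)| \asymp |t|\, N^{-j}$ on $[N, 2N]$, so the exponent pair $(1/6,\,2/3)$ --- obtained by applying one $A$-process (Weyl differencing) followed by one $B$-process (truncated Poisson summation, whose analytic core is exactly Lemma \ref{lem1}) to the trivial pair $(0,1)$ --- gives $\sum_{N < n \le M} n^{-it} \ll (|t|/N)^{1/6} N^{2/3} = |t|^{1/6} N^{1/2}$. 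Dividing by $N^{1/2}$ makes each dyadic contribution $\ll |t|^{1/6}$, and summing over the $O(\log|t|)$ blocks (and absorbing the $O(|t|^{-1/4})$ tail) yields $\zeta(\tfrac12 + it) \ll |t|^{1/6}\log|t|$, comfortably inside $|t|^{1/6}\log^2|t|$.

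For the subconvexity bound $\zeta(\tfrac12 + it) \ll_\epsilon |t|^{13/84 + \epsilon}$ there is no short argument within the scope of this paper: it is the current endpoint of the long line Weyl--van der Corput--Titchmarsh--Min--Haneke--Kolesnik--Bombieri--Iwaniec--Watt--Huxley, completed by Bourgain via the $\ell^2$-decoupling inequality for the moment curve and the resulting estimates for short exponential sums $\sum_{n \sim N} n^{it}$, and I would cite Bourgain's paper for it. Accordingly, the sole genuine obstacle is that this strong inequality cannot be reproved elementarily here; everything else --- the van der Corput estimate and the comparison of the exponents --- is routine, so in practice the proof of the lemma reduces to one citation to the classical literature (e.g. Titchmarsh's monograph) for the $|t|^{1/6}\log^2|t|$ bound and one citation to Bourgain for the $|t|^{13/84 + \epsilon}$ bound.
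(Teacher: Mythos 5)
Your proposal is correct and in substance coincides with the paper, whose entire proof is the two citations you end with: Iwaniec--Kowalski (formula (8.22)) for the classical bound $|t|^{1/6}\log^2|t|$ and Bourgain for $|t|^{13/84+\epsilon}$, the latter coming from the exponent pair $(13/84+\epsilon,\,55/84+\epsilon)$. One small slip in your (optional) sketch of the Weyl bound: the pair $(1/6,2/3)$ is $A\bigl(B(0,1)\bigr)$, i.e.\ the $B$-process applied first and then the $A$-process (the order you state gives $(1/2,1/2)$), though the estimate $(|t|/N)^{1/6}N^{2/3}=|t|^{1/6}N^{1/2}$ you actually use is the correct application of that pair.
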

\begin{proof}
See \cite[formula (8.22)]{IK} for the standard subconvexity bound $|t|^{1/6} \log^2 |t|$. See \cite{B} for the current best subconvexity bound $|t|^{13 / 84 + \epsilon}$ which came from the exponent pair $(13/84 + \epsilon, 55/84 + \epsilon)$.
\end{proof}

\begin{lem}[Van der Corput Process B] \label{lem-processB}
Let $A > 0$ be an absolute constant. Suppose $f(x)$ is a real-valued function such that $0 < \lambda_2 \le f''(x) \le A \lambda$ for all $x \in [a, b]$, and suppose that $|f^{(3)}(x)| \le \frac{A \lambda_2}{b - a}$ and $|f^{(4)}(x)| \le \frac{A \lambda_2}{(b - a)^2}$ throughout this interval. Put $f'(a) = \alpha$ and $f'(b) = \beta$. For integers $\nu \in [\alpha, \beta]$, let $x_\nu$ be the root of the equation $f'(x) = \nu$. Then
\[
\sum_{a \le n \le b} e(f(n)) = e\Bigl(\frac{1}{8} \Bigr) \sum_{\alpha \le \nu \le \beta} \frac{e(f(x_\nu) - \nu x_\nu)}{\sqrt{f''(x_\nu)}} + O(\log(2 + \beta - \alpha) + \lambda_2^{-1/2})
\]
where $e(u) = e^{2 \pi i u}$.
\end{lem}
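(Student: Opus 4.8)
The plan is to combine a truncated form of the Poisson summation formula with the method of stationary phase, which is exactly the classical van der Corput $B$-process. First I would pass from the exponential sum to a sum of integrals: using smooth majorants and minorants of the indicator of $[a,b]$ and estimating the sharp-versus-smooth discrepancy (equivalently, invoking a standard truncated Poisson summation formula for finite exponential sums) gives
\[
\sum_{a \le n \le b} e(f(n)) = \sum_{\alpha - 1 \le m \le \beta + 1} I_m + O\bigl(\log(2 + \beta - \alpha)\bigr), \qquad I_m := \int_{a}^{b} e\bigl(f(t) - m t\bigr)\, dt ,
\]
where the $\log$ absorbs the boundary contributions at $a$ and $b$ and the non-stationary integrals near the edges of the $m$-range. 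Here one uses that $f'' > 0$ makes $f'$ increase from $\alpha$ to $\beta$ on $[a,b]$, so $f'(t) - m$ is bounded away from $0$ for $m$ outside $[\alpha,\beta]$, and the bounds on $f''$ and $f'''$ control the requisite integrations by parts (the first-derivative test). It then remains to evaluate the $O(\beta - \alpha + 1)$ integrals $I_m$ with $\alpha - 1 \le m \le \beta + 1$.

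For such $m$, put $\phi_m(t) = f(t) - m t$, so $\phi_m'(t) = f'(t) - m$ vanishes at a unique point $x_m \in [a,b]$ precisely when $m \in [\alpha,\beta]$, and $\phi_m''(x_m) = f''(x_m) \in [\lambda_2, A\lambda]$. When $m \in [\alpha,\beta]$ is not within distance $1$ of an endpoint of $[\alpha,\beta]$, I would apply a stationary-phase evaluation of $I_m$: replacing $\phi_m$ near $x_m$ by its quadratic Taylor polynomial and using $\int_{-\infty}^{\infty} e\bigl(\tfrac12 f''(x_m)(t-x_m)^2\bigr)\, dt = e(\tfrac18)\, f''(x_m)^{-1/2}$ produces
\[
I_m = e\!\left(\tfrac18\right) \frac{e\bigl(f(x_m) - m x_m\bigr)}{\sqrt{f''(x_m)}} + \mathcal{E}_m ,
\]
and the hypotheses $|f'''| \le A\lambda_2/(b-a)$ and $|f^{(4)}| \le A\lambda_2/(b-a)^2$ are exactly what is needed to bound the stationary-phase error $\mathcal{E}_m$ uniformly, by a quantity of the shape $\ll 1/|m - \alpha| + 1/|\beta - m|$ (the endpoint contributions of the integral, away from the stationary point) plus a genuinely small remainder controlled by the higher derivatives. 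For the $O(1)$ values of $m$ within distance $1$ of $\alpha$ or $\beta$---where the stationary point $x_m$ approaches $a$ or $b$---I would instead bound $I_m \ll \lambda_2^{-1/2}$ directly by the second-derivative test, which is harmless since there are only boundedly many such terms.

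Summing over $m$ and collecting errors then gives the claimed formula, and the main obstacle is precisely this accumulation of errors: one must show $\sum_{\alpha - 1 \le m \le \beta + 1} |\mathcal{E}_m| \ll \log(2 + \beta - \alpha) + \lambda_2^{-1/2}$ rather than the trivial bound $(\beta - \alpha)\lambda_2^{-1/2}$. This works because the endpoint parts of $\mathcal{E}_m$ sum to $\sum_{m - \alpha \ge 1} 1/(m-\alpha) + \sum_{\beta - m \ge 1} 1/(\beta - m) \ll \log(2 + \beta - \alpha)$, the $\lambda_2^{-1/2}$ is contributed only by the boundedly many $m$ near $\alpha$ and $\beta$, and the remaining small remainder in each $\mathcal{E}_m$ is summable thanks to the third- and fourth-derivative hypotheses. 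This bookkeeping is routine but delicate, and is the only genuinely technical point; clean references are Titchmarsh \cite{Tit} or the monograph of Graham and Kolesnik.
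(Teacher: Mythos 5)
Your sketch is correct and follows the classical route: the paper itself offers no proof of this lemma, only a citation to Montgomery's \emph{Ten Lectures} (Theorem 10, Chapter 3), and the argument there is exactly your combination of the truncated Poisson summation (van der Corput) lemma with a stationary-phase evaluation of each integral, with the endpoint errors summing to the $\log$ term and the boundedly many $m$ near $\alpha,\beta$ contributing the $\lambda_2^{-1/2}$. The only caveat is that the summability of the higher-derivative remainders, which you correctly flag as the delicate bookkeeping, is precisely what the cited reference carries out in detail.
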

\begin{proof}
This is Theorem 10 in Chapter 3 of \cite{M} for example.
\end{proof}

\bigskip

Let
\[
M(s) := \sum_{B < b \le 2B} \frac{1}{b^{s}} \; \; \text{ and } \; \; F_{t, B}(u) := \sum_{B \le b \le u} e \Bigl( - \frac{ t \log b }{2 \pi} \Bigr).
\]
\begin{lem} \label{lem-M}
For any $\epsilon > 0$ and $|t| \ge 2$,
\[
M \bigl( \frac{3}{4} + i t \bigr) \ll_\epsilon \frac{|t|^{97/84 + \epsilon}}{B^{9/4}} + \frac{B^{1/4}}{|t|^{1/2}} + \frac{\log |t|}{B^{3/4}}.
\]
\end{lem}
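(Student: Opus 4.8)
The plan is to strip the factor $b^{-3/4}$ off by partial summation so as to reduce to the sum $F_{t,B}$ introduced just above, then to apply Van der Corput's Process~B (Lemma~\ref{lem-processB}), and finally to estimate the short dual sum that results, either trivially or by means of the subconvexity bound of Lemma~\ref{lem-subconvex}. We may assume $t>0$. Since $b\mapsto b^{-3/4}$ is monotone on $[B,2B]$ with $b^{-3/4}\asymp B^{-3/4}$ and total variation $\ll B^{-3/4}$, Abel summation gives
\[
M\Bigl(\tfrac34+it\Bigr)=\sum_{B<b\le 2B}b^{-3/4}e\Bigl(-\tfrac{t\log b}{2\pi}\Bigr)\ll B^{-3/4}\sup_{B\le u\le 2B}\bigl|F_{t,B}(u)\bigr|,
\]
so it is enough to bound $F_{t,B}(u)=\sum_{B\le b\le u}e(f(b))$ with $f(b)=-\tfrac{t\log b}{2\pi}$.

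For Process~B, note that $f'(b)=-\tfrac{t}{2\pi b}$, $f''(b)=\tfrac{t}{2\pi b^{2}}\asymp tB^{-2}$, and $|f^{(j)}(b)|\asymp tB^{-j}$ for $j\ge 1$, so in Lemma~\ref{lem-processB} one takes $\lambda_2\asymp tB^{-2}$, whence $\lambda_2^{-1/2}\asymp Bt^{-1/2}$ and $\log(2+\beta-\alpha)\ll\log t$; (when $u-B\ll B^{2}/t$ there are only $O(1)$ stationary points and $F_{t,B}(u)\ll Bt^{-1/2}+\log t$ directly). The stationary point equation $f'(x_\nu)=\nu$ gives $x_\nu=-\tfrac{t}{2\pi\nu}$, and on writing $\nu=-m$ the admissible $m$ run over an interval $I\subseteq[\tfrac{t}{4\pi B},\tfrac{t}{2\pi B}]$ of length $\asymp t/B$; one then finds $f''(x_\nu)=\tfrac{2\pi m^{2}}{t}$ and $f(x_\nu)-\nu x_\nu=\tfrac{t}{2\pi}\bigl(1-\log\tfrac{t}{2\pi}+\log m\bigr)$, so that $e\bigl(f(x_\nu)-\nu x_\nu\bigr)=\omega(t)\,m^{it}$ with $|\omega(t)|=1$. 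Hence Lemma~\ref{lem-processB} yields
\[
F_{t,B}(u)=e\Bigl(\tfrac18\Bigr)\omega(t)\,\frac{\sqrt t}{\sqrt{2\pi}}\sum_{m\in I}m^{-1+it}+O\Bigl(\frac{B}{\sqrt t}+\log t\Bigr),
\]
and the $O$-term, once multiplied by $B^{-3/4}$, already produces the terms $\tfrac{B^{1/4}}{\sqrt t}$ and $\tfrac{\log t}{B^{3/4}}$ of the claimed bound.

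It remains to estimate the short Dirichlet polynomial $\sum_{m\in I}m^{-1+it}$, where $I$ is a subinterval of $[M/2,M]$ with $M\asymp t/B$. Trivially this is $\ll 1$. To improve on this when $M$ is not too small, I would attach a smooth weight supported on $[1/4,1]$, apply Mellin inversion, shift the contour to $\Re s=\tfrac12$ — the simple pole of $\zeta$ contributing negligibly because the Mellin transform of the weight decays faster than any power — and invoke the subconvexity bound $\zeta(\tfrac12+i\tau)\ll_\epsilon|\tau|^{13/84+\epsilon}$ from Lemma~\ref{lem-subconvex}, obtaining $\sum_{m\in I}m^{-1+it}\ll_\epsilon M^{-1/2}t^{13/84+\epsilon}$; equivalently one may bypass the dual sum and apply the exponent pair $(13/84,55/84)$ underlying that bound directly to $F_{t,B}(u)$, whose phase satisfies $|f^{(j)}|\asymp tB^{-j}$ on $[B,2B]$. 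Inserting $\sum_{m\in I}m^{-1+it}\ll\min\{1,M^{-1/2}t^{13/84+\epsilon}\}$ with $M\asymp t/B$ back into the displays above, and collecting the contributions according to the size of $B$ relative to $\sqrt t$, will give the stated bound, the term $\tfrac{t^{97/84+\epsilon}}{B^{9/4}}=\tfrac{t^{1+13/84+\epsilon}}{B^{9/4}}$ being precisely the output of the subconvex estimate in the regime where it dominates.

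The step I expect to be the main obstacle is this final combination. The subconvexity (equivalently, exponent-pair) input improves on the trivial bound for the dual sum only while $M=t/B$ is large enough, i.e.\ while $B$ is not too large in terms of $\sqrt t$; in the complementary ranges of $B$ one must check that the trivial bound on the dual sum together with the $Bt^{-1/2}$ error from Process~B remains dominated by $\tfrac{t^{97/84+\epsilon}}{B^{9/4}}+\tfrac{B^{1/4}}{\sqrt t}+\tfrac{\log t}{B^{3/4}}$, which amounts to verifying that in every sub-range one of these three terms is the largest. Handling that case analysis — along with the degenerate cases where $u$ is close to $B$ or $I$ contains only $O(1)$ integers — is where the bookkeeping lies.
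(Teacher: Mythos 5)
Your outline follows the same route as the paper (partial summation down to $F_{t,B}$, Process B, then the exponent pair $(13/84+\epsilon,55/84+\epsilon)$ on the dual sum), and your stationary-phase computation is the one that Lemma~\ref{lem-processB} actually prescribes: $x_m=\frac{t}{2\pi m}$, $f''(x_m)=\frac{2\pi m^2}{t}$, so the dual terms carry the weight $f''(x_m)^{-1/2}=\frac{\sqrt{t}}{\sqrt{2\pi}\,m}\asymp \frac{B}{\sqrt{t}}$ and $F_{t,B}(u)=e(1/8)\omega(t)\frac{\sqrt{t}}{\sqrt{2\pi}}\sum_{m\in I}m^{-1+it}+O(Bt^{-1/2}+\log t)$. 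The genuine gap is precisely the step you defer as ``bookkeeping''. With your (correct) normalization, inserting $\sum_{m\in I}m^{-1+it}\ll\min\{1,(t/B)^{-1/2}t^{13/84+\epsilon}\}$ gives $M(\tfrac34+it)\ll\min\{\sqrt{t}\,B^{-3/4},\,t^{13/84+\epsilon}B^{-1/4}\}+B^{1/4}t^{-1/2}+B^{-3/4}\log t$, and since $t^{13/84+\epsilon}B^{-1/4}=(B^{2}/t)\cdot t^{97/84+\epsilon}B^{-9/4}$, your first term is majorized by the lemma's first term only when $B^{2}\ll t$. In the range $\sqrt{t}\ll B\ll t$ (take $B\asymp t^{4/5}$) the best your argument yields is $\asymp t^{-1/10}$, whereas all three terms on the right-hand side of the statement are $\ll t^{-3/10+\epsilon}$; the same numbers come out of your alternative of applying the exponent pair directly to $F_{t,B}$. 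So your closing claim that the subconvex input produces ``precisely'' $t^{97/84+\epsilon}/B^{9/4}$ is not correct under your normalization, and no case analysis over sub-ranges can close the deficit: your plan proves the stated inequality only when $B^{2}\ll t$.

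This is also where your write-up and the paper's proof genuinely diverge: in the paper's display the weight is taken as $f''(m)^{-1/2}\asymp m/\sqrt{t}\asymp\sqrt{t}/B$, i.e.\ $f''$ is evaluated at the frequency $m$ rather than at the stationary point $x_m\asymp B$, and it is only with that larger prefactor that partial summation gives $\frac{\sqrt{t}}{B}\,B^{p}(t/B)^{q}$ and hence the exponents $97/84$ and $9/4$; your weight is the one demanded by Lemma~\ref{lem-processB}. Moreover the missing bookkeeping is not repairable as the statement stands: by Lemma~\ref{lem-meanvalue}, $\int_{T}^{2T}|M(\tfrac34+it)|^{2}\,dt\gg TB^{-1/2}$ once $T\gg B$, so some $t\asymp T$ has $|M(\tfrac34+it)|\gg B^{-1/4}$, which for $B\asymp T^{4/5}$ is $\gg t^{-1/5}$ and already exceeds the claimed bound $\ll t^{-3/10+\epsilon}$ there. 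Hence the inequality can only be established with $B$ suitably constrained in terms of $t$ (for instance $B^{2}\ll t$, where your computation does deliver it, since then $t^{13/84+\epsilon}B^{-1/4}\le t^{97/84+\epsilon}B^{-9/4}$); as written, your proposal cannot reach the bound for all $B$ and $|t|\ge 2$, and you should either record the correct output $\min\{\sqrt{t}B^{-3/4},t^{13/84+\epsilon}B^{-1/4}\}+B^{1/4}t^{-1/2}+B^{-3/4}\log t$ or make the restriction on $B$ explicit.
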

\begin{proof}
Without loss of generality, we may assume that $t \ge 2$ since the negative case is just the complex conjugate of the positive case with the same absolute values. By partial summation,
\[
M \bigl( \frac{3}{4} + i t \bigr) = \int_{B}^{2B} \frac{1}{u^{3/4}} d F_{t, B}(u) = \frac{F_{t, B}(2 B)} {(2 B)^{3/4}} - \frac{F_{t, B}(B)} {B^{3/4}} + \frac{3}{4} \int_{B}^{2 B} \frac{F_{t, B}(u)}{u^{7/4}} du \ll \frac{1}{B^{3/4}} \max_{B \le u \le 2 B} | F_{t, B}(u) |.
\]
By Lemma \ref{lem-processB} with $f(x) = - \frac{t \log x}{2 \pi}$, $\alpha = - \frac{t}{2 \pi B}$, $\beta = -\frac{t}{2 \pi u}$, $x_m = \frac{t}{2 \pi m}$, $a = B$, $b = 2B$, $\lambda_2 = \frac{t}{8 \pi x^2}$, and $A = 100$, we have
\begin{align*}
F_{t, B} (u) =& \sum_{\alpha \le m \le \beta} f''(m)^{-1/2} e\Bigl( f(x_m) - m x_m + \frac{1}{8} \Bigr) + O \Bigl(\frac{B}{|t|^{1/2}} + \log |t| \Bigr) \\
\ll& \frac{1}{\sqrt{|t|}} \Big|\sum_{- \beta \le m \le - \alpha} m e \Bigl(- \frac{t \log m}{2 \pi} \Bigr) \Big| + \frac{B}{|t|^{1/2}} + \log |t| \\
\ll& \frac{\sqrt{|t|}}{B} \max_{t / (4 \pi B) \le v \le t / (2 \pi B)} \Big| \sum_{v \le e \le t / (2 \pi B)} e \Bigl(- \frac{t \log m}{2 \pi} \Bigr) \Big| + \frac{B}{|t|^{1/2}} + \log |t|
\end{align*}
by partial summation. Applying the theory of exponent pairs (see \cite[Chapter 3]{M} for example), the above gives
\[
F_{t, B}(u) \ll \frac{\sqrt{|t|}}{B} \Bigl( \frac{|t|}{|t| / B} \Bigr)^p \cdot \Bigl( \frac{|t|}{B} \Bigr)^q + \frac{B}{|t|^{1/2}} + \log |t| \ll \frac{|t|^{q + 1/2}}{B^{1 + q - p}} + \frac{B}{|t|^{1/2}} + \log |t|
\]
and
\[
M \bigl( \frac{3}{4} + i t \bigr) \ll \frac{|t|^{q + 1/2}}{B^{7/4 + q - p}} + \frac{B^{1/4}}{|t|^{1/2}} + \frac{\log |t|}{B^{3/4}}.
\]
Finally, we have the lemma after inputting Bourgain's exponent pair $p = 13/84 + \epsilon$ and $q = 55/84 + \epsilon$.
\end{proof}

%------------------------------------------------------------------------------------------------------------

\section{Proof of Theorem \ref{thm2}: Initial Manipulations}

By \eqref{squarefull}, we can rewrite
\begin{equation} \label{z1}
\frac{1}{X} \int_{X}^{2X} \Big| Q(x+y) - Q(x) - \frac{\zeta(3/2)}{\zeta(3)} H \Big|^2 dx = \frac{1}{X} \int_{X}^{2X} \Big| \sum_{x < a^2 b^3 \le x + y} \mu^2(b) - \frac{\zeta(3/2)}{\zeta(3)} H \Big|^2 dx 
\end{equation}
where $y = 2 \sqrt{x} H + H^2$ with $H \le X^{1/2 - \epsilon}$. Observe that
\begin{equation} \label{zeta}
\sum_{b = 1}^{\infty} \frac{\mu^2(b)}{b^s} = \prod_{p} \Bigl(1 + \frac{1}{p^s} \Bigr) = \prod_{p} \frac{(1 - \frac{1}{p^{2s}})}{(1 - \frac{1}{p^s})} = \frac{\zeta(s)}{\zeta(2s)}
\end{equation}
for $\Re{s} > 1$. With some parameter $\lambda > 0$, we will consider those squarefull numbers with $b \le X^{1/3} / H^{\lambda}$ and $b > X^{1/3} / H^{\lambda}$ separately. The right hand side of \eqref{z1} can be rewritten as
\begin{align} \label{z2}
\frac{1}{X} \int_{X}^{2X} & \bigg| \mathop{\sum_{x < a^2 b^3 \le x + y}}_{b \le X^{1/3} / H^{\lambda}} \mu^2(b) - H \sum_{b \le X^{1/3} / H^{\lambda}} \frac{\mu^2(b)}{b^{3/2}} + \mathop{\sum_{x < a^2 b^3 \le x + y}}_{b > X^{1/3} / H^{\lambda}} \mu^2(b) - H \sum_{b > X^{1/3} / H^{\lambda}} \frac{\mu^2(b)}{b^{3/2}} \bigg|^2 dx \nonumber \\
&= I_1 + I_2 + O(I_1^{1/2} I_2^{1/2})
\end{align}
by Cauchy-Schwarz inequality where
\begin{equation} \label{z-bsmall}
I_1 := \frac{1}{X} \int_{X}^{2X} \bigg| \mathop{\sum_{x < a^2 b^3 \le x + y}}_{b \le X^{1/3} / H^{\lambda}} \mu^2(b) - H \sum_{b \le X^{1/3} / H^{\lambda}} \frac{\mu^2(b)}{b^{3/2}} \bigg|^2 dx
\end{equation}
and
\begin{equation} \label{z-blarge}
I_2 := \frac{1}{X} \int_{X}^{2X} \bigg| \mathop{\sum_{x < a^2 b^3 \le x + y}}_{b > X^{1/3} / H^{\lambda}} \mu^2(b) - H \sum_{b > X^{1/3} / H^{\lambda}} \frac{\mu^2(b)}{b^{3/2}} \bigg|^2 dx.
\end{equation}
Observe that when $b > X^{1/3} / H^{\lambda}$, we have $a \le 2 H^{3\lambda/2}$. For fixed $a$, the variable $b$ satisfies
\[
\sqrt[3]{\frac{x}{a^2}} < b \le \sqrt[3]{\frac{x + y}{a^2}} \; \; \text{ or } \; \; \sqrt[3]{\frac{x}{a^2}} < b \le \sqrt[3]{\frac{x}{a^2}} \Bigl(1 + O \Bigl( \frac{y}{x} \Bigr) \Bigr) = \sqrt[3]{\frac{x}{a^2}} + O \Bigl( \frac{H}{X^{1/6} a^{2/3}} \Bigr)
\]
Hence,
\[
\mathop{\sum_{x < a^2 b^3 \le x + y}}_{b > X^{1/3} / H^{\lambda}} \mu^2(b) \ll \sum_{a \le 2 H^{3 \lambda/2}} \Bigl( 1 + \frac{H}{X^{1/6} a^{2/3}} \Bigr) \ll H^{3\lambda/2} + \frac{H^{1 + \lambda/2}}{X^{1/6}},
\]
and
\[
H \sum_{b > X^{1/3} / H^{\lambda}} \frac{\mu^2(b)}{b^{3/2}} \ll \frac{H^{1 + \lambda/2}}{X^{1/6}}.
\]
Putting these two upper bounds into \eqref{z-blarge}, we have
\begin{equation} \label{z-blarge2}
I_2 \ll H^{3 \lambda} + \frac{H^{2 + \lambda}}{X^{1/3}} \ll H^{2/3 - \epsilon}
\end{equation}
provided $\lambda < 2/9 - \epsilon / 3$ and $H \le X^{3/14 - \epsilon}$. To study $I_1$, we separate the range of $b$ between $b \le H^{2/3 + \epsilon}$ and $b > H^{2/3 + \epsilon}$. Then,
\begin{align} \label{JJ}
I_1 =& \frac{1}{X} \int_{X}^{2X} \bigg| \mathop{\sum_{x < a^2 b^3 \le x + y}}_{b \le H^{2/3 + \epsilon}} \mu^2(b) - H \sum_{b \le H^{2/3 + \epsilon}} \frac{\mu^2(b)}{b^{3/2}} \nonumber \\
&+ \mathop{\sum_{x < a^2 b^3 \le x + y}}_{H^{2/3 + \epsilon} < b \le X^{1/3} / H^{\lambda}} \mu^2(b)  - H \sum_{H^{2/3 + \epsilon} < b \le X^{1/3} / H^{\lambda}} \frac{\mu^2(b)}{b^{3/2}} \bigg|^2 dx \nonumber \\
=& J_1 + J_2 + O ( J_1^{1/2} J_2^{1/2} )
\end{align}
where
\begin{equation} \label{J1}
J_1 := \frac{1}{X} \int_{X}^{2X} \bigg| \mathop{\sum_{x < a^2 b^3 \le x + y}}_{b \le H^{2/3 + \epsilon}} \mu^2(b) - H \sum_{b \le H^{2/3 + \epsilon}} \frac{\mu^2(b)}{b^{3/2}} \biggr|^2 dx,
\end{equation}
and
\begin{equation} \label{J2}
J_2 := \frac{1}{X} \int_{X}^{2X} \bigg| \mathop{\sum_{x < a^2 b^3 \le x + y}}_{H^{2/3 + \epsilon} < b \le X^{1/3} / H^{\lambda}} \mu^2(b) - H \sum_{H^{2/3 + \epsilon} < b \le X^{1/3} / H^{\lambda}} \frac{\mu^2(b)}{b^{3/2}} \biggr|^2 dx.
\end{equation}

%----------------------------------------------------------------------------------------------
\section{Proof of Proposition \ref{prop1}}

We are going to study a smoothed version of $J_1$ as defined in \eqref{J1}. Let $L = X^{1 - \epsilon/2}$ and $K$ be some sufficiently large integer (in terms of $\epsilon$). Consider
\begin{equation} \label{z-smallsmooth}
\frac{1}{X} \int_{-\infty}^{\infty} \sigma_{K, X, L}^{\pm} (x) \bigg| \mathop{\sum_{x < a^2 b^3 \le x + y}}_{b \le H^{2/3 + \epsilon}} \mu^2(b) - H \sum_{b \le H^{2/3 + \epsilon}} \frac{\mu^2(b)}{b^{3/2}} \bigg|^2 dx
\end{equation}
We have
\begin{align} \label{ab-sum}
\mathop{\sum_{x < a^2 b^3 \le x + y}}_{b \le H^{2/3 + \epsilon}} \mu^2(b) =& \sum_{b \le H^{2/3 + \epsilon}} \mu^2(b) \sum_{\sqrt{x / b^3} < a \le \sqrt{(x + y) / b^3}} 1 \nonumber \\
=& \sum_{b \le H^{2/3 + \epsilon}} \mu^2(b) \Bigl[ \sqrt{\frac{x + y}{b^3}} - \sqrt{\frac{x}{b^3}} - \psi \Bigl(\sqrt{\frac{x + y}{b^3}} \Bigr) + \psi \Bigl(\sqrt{\frac{x}{b^3}} \Bigr) \Bigr] \nonumber \\
=& H \sum_{b \le H^{2/3 + \epsilon}} \frac{\mu^2(b)}{b^{3/2}} - \sum_{b \le H^{2/3 + \epsilon}} \mu^2(b) \Bigl[ \psi \Bigl(\sqrt{\frac{x + y}{b^3}} \Bigr) - \psi \Bigl(\sqrt{\frac{x}{b^3}} \Bigr) \Bigr] 
\end{align}
where $\psi(u) = u - [u] - 1/2$. For $\psi(u)$, we apply the truncated Fourier expansion
\begin{equation} \label{frac-fourier}
\psi(u) = - \frac{1}{2 \pi i} \sum_{0 < |n| \le N} \frac{1}{n} e(n u) + O \Bigl( \min \bigl(1, \frac{1}{N \| u \|} \bigr) \Bigr).
\end{equation}
We take $N = X^{10}$ and put \eqref{frac-fourier} into \eqref{ab-sum}. The arising error term is $O(1 / X^4)$ unless $\| \sqrt{(x + y) / b^3} \| < X^{-5}$ or $\| \sqrt{x / b^3} \| < X^{-5}$. When $\| \sqrt{(x + y) / b^3} \| < X^{-5}$ or $\| \sqrt{x / b^3} \| < X^{-5}$, we have $|\sqrt{(x + y) / b^3} - a_1| < X^{-5}$ or $| \sqrt{x / b^3} - a_2| < X^{-5}$ for some integers $a_1, a_2$ which implies
\begin{equation} \label{support}
\Big| \frac{x+y}{b^3} - a_1^2 \Bigr|, \; \Big| \frac{x}{b^3} - a_2^2 \Bigr| \ll \frac{1}{X^{4.5}} \; \; \text{ or } \; \; |x+y - a_1^2 b^3|, \; |x - a_2^2 b^3| \ll \frac{1}{X^{3.5}}
\end{equation}
as $b \le H^{2/3 + \epsilon} \le X^{1/3}$. Let $E(x)$ be the error in the second sum in \eqref{ab-sum} coming from the error term of \eqref{frac-fourier} when $\| \sqrt{(x + y) / b^3} \| < X^{-5}$ or $\| \sqrt{x / b^3} \| < X^{-5}$ for some $b \le H^{2/3 + \epsilon}$. As there are $O(\sqrt{X})$ squarefull numbers in the interval $[X, 3X]$ by \eqref{fullcount},
\begin{equation} \label{E}
E(x) \ll H^{2/3 + \epsilon} \ll X^{1/3}, \; \; \text{ and } \; \; E(x) \text{ is supported on } O(\sqrt{X}) \text{ intervals of length } O\Bigl(\frac{1}{X^{3.5}} \Bigr) \text{ each}
\end{equation}
by \eqref{support}. Thus, we can replace \eqref{z-smallsmooth} by
\begin{align} \label{z-smallsmooth2}
\frac{1}{4 \pi^2 X} & \int_{-\infty}^{\infty} \sigma_{K, X, L}^{\pm} (x) \bigg| \sum_{b \le H^{2/3 + \epsilon}} \mu^2(b) \sum_{0 < |n| \le N} \frac{1}{n} e \Bigl(\frac{n \sqrt{x}}{b^{3/2}} \Bigl) \Bigl[1 - e \Bigl( \frac{n H}{b^{3/2}} \Bigr) \Bigr] + E(x) + O\Bigl(\frac{1}{X^4} \Bigr) \bigg|^2 dx \nonumber \\
=& \frac{1}{X} \Bigl[J_{\pm} + O \Bigl(\frac{J_{\pm}^{1/2}}{X} + \frac{1}{X^{2}}\Bigr) \Bigr]
\end{align}
by Cauchy-Schwarz inequality and $\int_{-\infty}^{\infty} \sigma_{K, X, L}^{\pm} (x) |E(x) + O(1/X^4)|^2 dx \ll X^{-2}$ via \eqref{E}. Here
\begin{equation} \label{z-smallsmooth2.5}
J_{\pm} := \frac{1}{4 \pi^2} \int_{-\infty}^{\infty} \sigma_{K, X, L}^{\pm} (x) \bigg| \sum_{b \le H^{2/3 + \epsilon}} \mu^2(b) \sum_{0 < |n| \le N} \frac{1}{n} e \Bigl(\frac{n \sqrt{x}}{b^{3/2}} \Bigl) \Bigl[1 - e \Bigl( \frac{n H}{b^{3/2}} \Bigr) \Bigr] \bigg|^2 dx.
\end{equation}
Note that, since the characteristic function of the interval $[X, 2X]$ is between $\sigma_{K,X,L}^-$ and $\sigma_{K,X,L}^+$, we have
\begin{equation} \label{newintermediate}
\frac{1}{X} \Bigl[J_{-} + O \Bigl(\frac{J_{-}^{1/2}}{X} + \frac{1}{X^{2}}\Bigr) \Bigr] \le J_1 \le \frac{1}{X} \Bigl[J_{+} + O \Bigl(\frac{J_{+}^{1/2}}{X} + \frac{1}{X^{2}}\Bigr) \Bigr].
\end{equation}
Expanding \eqref{z-smallsmooth2.5} out,
\begin{align} \label{z-smallsmooth3}
J_{\pm} =\frac{1}{4 \pi^2} & \sum_{b_1, b_2 \le H^{2/3 + \epsilon}} \sum_{0 < |n_1|, |n_2| \le N} \mu^2(b_1) \mu^2(b_2) \frac{1}{n_1 n_2} \nonumber \\
&\times \Bigl[1 - e \Bigl( \frac{n_1 H}{b_1^{3/2}} \Bigr) \Bigr] \overline{\Bigl[1 - e \Bigl( \frac{n_2 H}{b_2^{3/2}} \Bigr) \Bigr]} \int_{-\infty}^{\infty} \sigma_{K, X, L}^{\pm} (x) e \Bigl(x^{1/2} \Bigl(\frac{n_1}{b_1^{3/2}} - \frac{n_2}{b_2^{3/2}} \Bigr) \Bigr) dx.
\end{align}
By Lemma \ref{lem1} with $H = X^\epsilon$, $W = 1$, $Q = X$, $L = X^{1 - \epsilon/2}$ and $R = 1/X^{1-\epsilon}$, the contribution from those terms with $| \frac{n_1}{b_1^{3/2}} - \frac{n_2}{b_2^{3/2}} | > \frac{1}{X^{1/2 - \epsilon}}$ is
\begin{equation} \label{off}
\ll X^{- K \epsilon / 2} \ll \frac{1}{X^5}
\end{equation}
by picking $K = [20 / \epsilon]$ for example. Hence, we may restrict the sum in \eqref{z-smallsmooth3} to those integers for which
\begin{equation*}
\bigg| \frac{n_1}{b_1^{3/2}} - \frac{n_2}{b_2^{3/2}} \bigg| \le \frac{1}{X^{1/2 - \epsilon}}.
\end{equation*}
We separate those $(n_1, n_2, b_1, b_2)$ for which $\frac{n_1}{b_1^{3/2}} = \frac{n_2}{b_2^{3/2}}$ (i.e. $n_1^2 b_2^3 = n_2^2 b_1^3$) and those for which $\frac{n_1}{b_1^{3/2}} \neq \frac{n_2}{b_2^{3/2}}$. In the first case, we must have $b_1 = b_2 = b$ and $n_1 = n_2 = n$ which contributes
\begin{align*}
\frac{\overline{\sigma_{K,X,L}^{\pm}}}{4 \pi^2} \sum_{b \le H^{2/3 + \epsilon}} \mu^2(b) \sum_{n \neq 0} \frac{1}{n^2}
\Big|1 - e \Bigl( \frac{n H}{b^{3/2}} \Bigr) \Big|^2 + O \Bigl(\frac{1}{X^5}\Bigr)
\end{align*}
where the error term comes from adding $|n| > N = X^{10}$ and
\begin{equation} \label{mean-sigma}
\overline{\sigma_{K,X,L}^{\pm}} = \int_{-\infty}^{\infty} \sigma_{K,X,L}^{\pm}(x) dx = X + O_\epsilon(X^{1- \epsilon/2}).
\end{equation}
As
\[
\Big|1 - e \Bigl( \frac{n H}{b^{3/2}} \Bigr) \Big| = 2 \Big| \sin \Bigl( \frac{n \pi H}{b^{3/2}} \Bigr) \Big|,
\]
the diagonal terms ($\frac{n_1}{b_1^{3/2}} = \frac{n_2}{b_2^{3/2}}$) from \eqref{z-smallsmooth3} contribute 
\begin{equation} \label{diagonal}
(1 + O(X^{-\epsilon /2})) X H^2  \sum_{b \le H^{2/3 + \epsilon}} \frac{\mu^2(b)}{b^3} \sum_{n \neq 0} S\Bigl( \frac{n H}{b^{3/2}} \Bigr)^2 + O \Bigl(\frac{1}{X^5}\Bigr)
\end{equation}
where $S(x) = \frac{\sin \pi x}{\pi x}$. Splitting $n_i$ and $b_i$ dyadically, it remains to show that the contribution from those non-diagonal terms with $n_1^2 b_2^3 \neq n_2^2 b_1^3$ satisfies
\begin{equation} \label{bound2}
\min \Bigl(\frac{1}{N_1}, \frac{H}{B_1^{3/2}} \Bigr) \min \Bigl(\frac{1}{N_2}, \frac{H}{B_2^{3/2}} \Bigr) \# \Bigl\{ (b_1, b_2, n_1, n_2) : \begin{matrix} b_i \sim B_i, \\ n_i \sim N_i, \end{matrix} 0 < \Big| \frac{n_1}{b_1^{3/2}} - \frac{n_2}{b_2^{3/2}} \Big| \le \frac{1}{X^{1/2 - \epsilon}} \Bigr\} \ll_\epsilon H^{2/3 - \epsilon}
\end{equation}
for any $B_1, B_2 \le H^{2/3 + \epsilon}$ and $N_1, N_2 \le N$. We have $\frac{B_1^{3/2}}{X^{1/2 - \epsilon}}, \frac{B_2^{3/2}}{X^{1/2 - \epsilon}} \ll \frac{1}{X^{\epsilon / 4}} < 0.1$ as $B_1, B_2 \ll X^{1/3 - 5\epsilon/6}$. So, we can assume that $N_1 \ge \frac{10 B_1^{3/2}}{X^{1/2 - \epsilon}}$ and $N_2 \ge \frac{10 B_2^{3/2}}{X^{1/2 - \epsilon}}$. Thus,
\begin{equation} \label{dioph-ineq}
\Big| \frac{n_1}{b_1^{3/2}} - \frac{n_2}{b_2^{3/2}} \Big| \le \frac{1}{X^{1/2 - \epsilon}} \; \; \text{ or } \; \; |n_1^2 b_2^3 - n_2^2 b_1^3| \ll \frac{N_1 B_1^{3/2} B_2^{3}}{X^{1/2 - \epsilon}} \asymp \frac{N_1^{1/2} N_2^{1/2} B_1^{9/4} B_2^{9/4}}{X^{1/2 - \epsilon}}
\end{equation}
has no solution unless $N_1 B_2^{3/2} \asymp N_2 B_1^{3/2}$. Let $n = \gcd(n_1, n_2)$ and $b = \gcd(b_1, b_2)$. We can rewrite
\[
n_1 = n n_1', \; \; n_2 = n n_2', \; \; b_1 = b b_1', \; \; b_2 = b b_2'.
\]
Inequality \eqref{dioph-ineq} implies
\begin{equation*}
\Big| \frac{n_1'}{n_2'} - \frac{b_1'^{3/2}}{b_2'^{3/2}} \Big| \ll \frac{B_1^{3/2}}{N_2 X^{1/2 - \epsilon}}.
\end{equation*}
Note that $|\frac{n_1'}{n_2'} - \frac{n_1''}{n_2''}| \gg \frac{1}{(N_2 / n)^2}$ for distinct fractions. Hence, for fixed $b$, $n$, $b_1'$ and $b_2'$, the number of $n_1'$ and $n_2'$ satisfying \eqref{dioph-ineq} is
\[
\ll \frac{B_1^{3/2} / (N_2 X^{1/2 - \epsilon})}{1 / (N_2 / n)^2} + 1 = \frac{B_1^{3/2} N_2}{n^2 X^{1/2 - \epsilon}} + 1.
\]
Thus, the quantity in \eqref{bound2} is
\begin{align} \label{bound3}
&\ll \sum_{b \le 2 \min(B_1, B_2)} \sum_{n \le 2 \min(N_1, N_2)} \sum_{b_1' \ll B_1 / b} \; \sum_{b_2' \ll B_2 / b} \min \Bigl(\frac{1}{N_1}, \frac{H}{B_1^{3/2}} \Bigr) \min \Bigl(\frac{1}{N_2}, \frac{H}{B_2^{3/2}} \Bigr) \Bigl( \frac{B_1^{3/2} N_2}{n^2 X^{1/2 - \epsilon}} + 1 \Bigr) \nonumber \\
&\ll \sum_{b} \sum_{n} \sum_{b_1' \ll B_1 / b} \; \sum_{b_2' \ll B_2 / b} \frac{1}{N_1} \frac{1}{N_2} \frac{B_1^{3/2} N_2}{n^2 X^{1/2 - \epsilon}} + \sum_{b} \sum_{n \le 2 \min(N_1, N_2)} \sum_{b_1' \ll B_1 / b} \sum_{b_2' \ll B_2 / b} \frac{1}{N_1} \frac{1}{N_2} \nonumber \\
&\ll \frac{B_1^{5/2} B_2}{X^{1/2 - \epsilon}} + \sum_{b} \sum_{n \le 2 \sqrt{N_1 N_2}} \frac{B_1 B_2}{b^2 N_1 N_2} \ll \frac{H^{7/3 + 7\epsilon/2}}{X^{1/2 - \epsilon}} + \frac{H^{4/3 + 2 \epsilon}}{\sqrt{N_1 N_2}} \ll_\epsilon H^{2/3 - \epsilon}
\end{align}
as long as $H \le X^{3/10 - 2 \epsilon}$ and $N_1 N_2 \ge H^{4/3 + 6 \epsilon}$. 

\bigskip

It remains to deal with $N_1 N_2 < H^{4/3 + 6 \epsilon}$. Since $n^2 b^3$ divides both $n_1^2 b_2^3$ and $n_2^2 b_1^3$, and $n_1^2 b_2^3 \neq n_2^2 b_1^3$, inequality \eqref{dioph-ineq} implies
\[
b^3 \le n^2 b^3 \le |n_1^2 b_2^3 - n_2^2 b_1^3| \ll \frac{N_1^{1/2} N_2^{1/2} B_1^{9/4} B_2^{9/4}}{X^{1/2 - \epsilon}}
\]
and
\[
b \ll \frac{(N_1 N_2)^{1/6} (B_1 B_2)^{3/4}}{X^{1/6 - \epsilon/3}} \ll \frac{H^{2/9 + \epsilon} \cdot H^{1 + 3\epsilon/2}}{X^{1/6 - \epsilon / 3}} \ll H^{2/3 - \epsilon}
\]
as long as $H \le X^{3/10 - 3 \epsilon}$. Then, inequality \eqref{dioph-ineq} implies
\[
\Big| \frac{b_1'}{b_2'} - \frac{n_1'^{2/3}}{n_2'^{2/3}} \Big| \ll \frac{B_1 B_2^{1/2}}{N_2 X^{1/2 - \epsilon}}.
\]
Note that $|\frac{b_1'}{b_2'} - \frac{b_1''}{b_2''}| \gg \frac{1}{(B_2 / b)^2}$ for distinct fractions. Hence, for fixed $b$, $n$, $n_1'$ and $n_2'$, the number of $b_1'$ and $b_2'$ satisfying \eqref{dioph-ineq} is
\[
\ll \frac{B_1 B_2^{1/2} / (N_2 X^{1/2 - \epsilon})}{1 / (B_2 / b)^2} + 1 = \frac{B_1 B_2^{5/2}}{b^2 N_2 X^{1/2 - \epsilon}} + 1.
\]
Thus, the quantity in \eqref{bound2} is
\begin{align} \label{bound4}
&\ll \sum_{b \le 2 \min(B_1, B_2)} \sum_{n \le 2 \min(N_1, N_2)} \sum_{n_1' \ll N_1 / n} \; \sum_{n_2' \ll N_2 / n} \min \Bigl(\frac{1}{N_1}, \frac{H}{B_1^{3/2}} \Bigr) \min \Bigl(\frac{1}{N_2}, \frac{H}{B_2^{3/2}} \Bigr) \Bigl( \frac{B_1 B_2^{5/2}}{b^2 N_2 X^{1/2 - \epsilon}} + 1 \Bigr) \nonumber \\
&\ll \sum_{b} \sum_{n} \sum_{n_1'} \sum_{n_2'} \frac{1}{N_1 N_2} \cdot \frac{B_1 B_2^{5/2}}{b^2 N_2 X^{1/2 - \epsilon}} + \sum_{b \ll H^{2/3 - \epsilon}} \sum_{n} \sum_{n_1'} \sum_{n_2'} \frac{1}{N_1 N_2} \ll \frac{B_1 B_2^{5/2}}{X^{1/2 - \epsilon}} + H^{2/3 - \epsilon} \ll_\epsilon H^{2/3 - \epsilon}
\end{align}
as long as $H \le X^{3/10 - 3\epsilon}$. Combining \eqref{z-smallsmooth2.5}, \eqref{newintermediate}, \eqref{z-smallsmooth3}, \eqref{off}, \eqref{mean-sigma}, \eqref{diagonal}, \eqref{bound3} and \eqref{bound4}, we have
\begin{equation} \label{Jfinal}
J_1 = (1 + O(X^{-\epsilon /2})) 2 H^2  \sum_{b \le H^{2/3 + \epsilon}} \frac{\mu^2(b)}{b^3} \sum_{n \ge 1} S\Bigl( \frac{n H}{b^{3/2}} \Bigr)^2 + O_\epsilon ( H^{2/3 - \epsilon / 2} )
\end{equation}
when $X^\epsilon \le H \le X^{3/10 - 3\epsilon}$. 

%----------------------------------------------------------------------------------------------
\section{Proof of Proposition \ref{prop2}} \label{sec-V}

Throughout this section, we assume that $H \le X^{9/46 - 3 \epsilon}$. Recall from \eqref{J2}, we want to establish that
\begin{equation} \label{J2final}
J_2 = \frac{1}{X} \int_{X}^{2X} \bigg| \mathop{\sum_{x < a^2 b^3 \le x + y}}_{H^{2/3 + \epsilon} < b \le X^{1/3} / H^{\lambda}} \mu^2(b) - H \sum_{H^{2/3 + \epsilon} < b \le X^{1/3} / H^{\lambda}} \frac{\mu^2(b)}{b^{3/2}} \biggr|^2 dx \ll H^{2/3 - \epsilon / 8}.
\end{equation}
We will follow the proof of Proposition 2 in \cite{GMRR} closely. First, one can split the sums into dyadic ranges according to the size of $B < b \le 2 B$. It suffices to show that, for $B \in [H^{2/3 + \epsilon}, 2X^{1/3} / H^\lambda]$, we have
\[
J_{2, B} := \frac{1}{X} \int_{X}^{2X} \bigg| \mathop{\sum_{x < a^2 b^3 \le x + y}}_{B < b \le 2B} \mu^2(b) - H \sum_{B < b \le 2B} \frac{\mu^2(b)}{b^{3/2}} \biggr|^2 dx \ll H^{2/3 - \epsilon / 4}.
\]
Define
\[
A(x) := \mathop{\sum_{a^2 b^3 \le x}}_{B < b \le 2B} \mu^2(b) - \sqrt{x} \sum_{B < b \le 2B} \frac{\mu^2(b)}{b^{3/2}}, \; \; \text{ and } \; \; B(x) := A(x^2).
\]
Since $y = 2 \sqrt{x} H + H^2$, one can show that
\begin{equation*}
J_{2, B} = \frac{1}{X} \int_{X}^{2X} \big| A \bigl( (\sqrt{x} + H)^2 \bigr) - A \bigl( (\sqrt{x})^2 \bigr) \big|^2 dx \ll \frac{1}{\sqrt{X}} \int_{\sqrt{X}}^{\sqrt{2X}} | B(u + H) - B(u) |^2 du
\end{equation*}
by a change of variable $u = \sqrt{x}$. Following \cite{GMRR} by using a technique by Saffari and Vaughan, we have
\begin{equation} \label{J2B}
J_{2, B} \ll \frac{1}{\sqrt{X}} \int_{\sqrt{X}}^{3 \sqrt{X}} | B(u (1 + \theta)) - B(u) |^2 du
\end{equation}
for some $\theta \in [ \frac{H}{3 \sqrt{X}} , \frac{3H}{\sqrt{X}} ]$. Using contour integration,
\begin{equation} \label{BB}
B(e^y) = A(e^{2y}) = \frac{1}{2 \pi i} \int_{1 - i \infty}^{1 + i \infty} \frac{e^{2 y s}}{s} \zeta(2s) M(3s) \, ds - e^{y} \sum_{B < b \le 2B} \frac{\mu^2(b)}{b^{3/2}},
\end{equation}
where
\[
M(s) := \sum_{B < b \le 2B} \frac{\mu^2(b)}{b^s}.
\]
Moving the contour to the line $\Re s = 1/4$ and noticing that the residue from $s = 1/2$ cancels with the second term on the right hand side of \eqref{BB}, we have
\[
\frac{B(e^{w + x}) - B(e^x)}{e^{x/2}} = \frac{1}{2 \pi} \int_{-\infty}^{\infty} \frac{e^{2 w( 1/4 + i t)} - 1}{1/4 + i t} \, e^{2 i t x} \, \zeta(\frac{1}{2} + 2 i t) M(\frac{3}{4} + 3 i t) \, dt
\]
where $e^w = 1 + \theta$ with $w \asymp H / \sqrt{X}$. Hence, by Plancherel's identity,
\begin{equation} \label{BPlan}
\int_{0}^{\infty} |B(e^{x+w}) - B(e^x)|^2 \, \frac{dx}{e^x} \ll \int_{-\infty}^{\infty} \Big| \frac{e^{2 w( 1/4 + i t)} - 1}{1/4 + i t} \Big|^2 \cdot \big|\zeta(\frac{1}{2} + 2 i t) M(\frac{3}{4} + 3 i t) \big|^2 \, dt.
\end{equation}
Combining \eqref{J2B} and \eqref{BPlan} with a change of variable $u = e^x$, we have
\begin{align} \label{J2Bnew}
J_{2,B} \ll& \sqrt{X} \int_{0}^{\infty} |B(u(1 + \theta)) - B(u)|^2 \frac{du}{u^2} \nonumber \\
\ll& \sqrt{X} \int_{-\infty}^{\infty} \Big| \frac{e^{2 w( 1/4 + i t)} - 1}{1/4 + i t} \Big|^2 \big|\zeta(\frac{1}{2} + 2 i t) M(\frac{3}{4} + 3 i t) \big|^2 \, dt \nonumber \\
\ll& \sqrt{X} \int_{-\infty}^{\infty} \min \Bigl( \frac{ H^2 }{X}, \frac{1}{|t|^2} \Bigr) \cdot \big|\zeta(\frac{1}{2} + 2 i t) M(\frac{3}{4} + 3 i t) \big|^2 \, dt.
\end{align}
By Lemma \ref{lem-subconvex}, the contribution from $|t| \ge X^2$ to the above integral is
\[
\ll \sqrt{X} \int_{X^2}^{\infty} |t|^{-5/3 + \epsilon} B^{1/2} \, dt \ll 1
\]
as $B \ll X^{1/3} / H^\lambda$. The contribution of $|t| \le X^2$ to the right hand side of \eqref{J2Bnew} is
\begin{align*}
\ll& \frac{H^2}{\sqrt{X}} \int_{|t| \le 2\sqrt{X} / H} \big|\zeta(\frac{1}{2} + 2 i t) M(\frac{3}{4} + 3 i t) \big|^2 \, dt \\
&+ \sqrt{X} \int_{\sqrt{X} / H}^{X^2} \frac{1}{T^2} \cdot \frac{1}{T} \int_{T \le |t| \le 2T} \big|\zeta(\frac{1}{2} + 2 i t) M(\frac{3}{4} + 3 i t) \big|^2 \, dt \, dT \\
\ll& H \sup_{\sqrt{X} / H \le T \le X^2} \frac{1}{T} \int_{-T}^{T} \big|\zeta(\frac{1}{2} + 2 i t) M(\frac{3}{4} + 3 i t) \big|^2 \, dt
\end{align*}
Thus,
\begin{equation} \label{tempJ2B}
J_{2, B} \ll H \sup_{\sqrt{X} / H \le T \le X^2} \frac{1}{T} \int_{-T}^{T} \big|\zeta(\frac{1}{2} + 2 i t) M(\frac{3}{4} + 3 i t) \big|^2 \, dt + 1.
\end{equation}

First, note that $B \le T$ for otherwise
\[
\frac{\sqrt{X}}{H} \le T < B \ll \frac{X^{1/3}}{H^{2/9 - \epsilon / 3}}
\]
which implies $X^{1/6} \ll H^{7/9 + \epsilon / 3}$ or $X^{3/14 - \epsilon} \ll H$, a contradiction. By Lemmas \ref{lem-meanvalue} and \ref{lem-subconvex}, the above implies
\begin{align*}
J_{2,B} \ll& H \frac{T^{13/42 + \epsilon/2}}{T} \int_{-T}^{T} \big|M(\frac{3}{4} + 3 i t) \big|^2 \, dt + 1 \\
\ll& H \frac{T^{13/42 + \epsilon/2}}{T} (T + B) \sum_{B < b \le 2 B} \frac{1}{b^{3/2}} \ll H \frac{T^{13/42 + \epsilon/2}}{\sqrt{B}} \ll H^{2/3 - \epsilon/2}
\end{align*}
when $B \ge H^{2/3 + \epsilon} T^{13/21 + \epsilon}$. From now on, we assume that $B < H^{2/3 + \epsilon} T^{13/21 + \epsilon}$.

\bigskip

By Cauchy-Schwarz inequality,
\begin{align} \label{Jbasic}
J_{2, B} \ll& H \sup_{\sqrt{X} / H \le T \le X^2} \Bigl( \frac{1}{T} \int_{-T}^{T} \big|\zeta(\frac{1}{2} + 2 i t) \big|^4 dt \Bigr)^{1/2} \Bigl( \frac{1}{T} \int_{-T}^{T} \big| M(\frac{3}{4} + 3 i t) \big|^4 dt \Bigr)^{1/2} \nonumber \\
\ll& H \log^2 X \sup_{\sqrt{X} / H \le T \le X^2} \Bigl( \frac{1}{T} \int_{-T}^{T} \big| M(\frac{3}{4} + 3 i t) \big|^4 dt \Bigr)^{1/2}
\end{align}
by Lemma \ref{lem-fourthmoment}. Note that
\[
M^2 (\frac{3}{4} + 3 i t) = \sum_{B^2 < b' \le 4B^2} \frac{d_B( b' )}{b'^{3/4}} \; \; \text{ with } \; \; d_B(b') = \# \{(b_1, b_2): b' = b_1 b_2, \; B < b_1, b_2 \le 2B \}.
\]
By Lemma \ref{lem-meanvalue}, \eqref{Jbasic} gives
\[
J_{2,B} \ll H \log^2 X \sup_{\sqrt{X} / H \le T \le X^2} \Bigl( \frac{T + B^2}{T} \sum_{B^2 < b \le 4 B^2} \frac{d_B(b')^2}{b'^{3/2}} \Bigr)^{1/2} + 1 \ll_\epsilon H X^{\epsilon^2 / 4} \Bigl( \frac{1}{\sqrt{B}} + \frac{\sqrt{B}}{\sqrt{T}} \Bigr) + 1.
\]
Since $X^\epsilon \le H$ and $H^{2/3 + \epsilon} \le B$, the above implies $J_{2, B} \ll_\epsilon H^{2/3 - \epsilon / 4}$ when $B \le T / H^{2/3 + \epsilon}$.

\bigskip

From now on, we assume that $B > T / H^{2/3 + \epsilon}$. By Lemma \ref{lem-meanvalue}, the contribution from those $t$'s with $|M(\frac{3}{4} +  3 it)| \le \frac{1}{H^{1/6 + \epsilon}}$ to \eqref{tempJ2B} is also acceptable. Define the set
\[
S(V) := \{ t \in [-T, T] : V \le |M(\frac{3}{4} +  3 it)| < 2 V \} \; \; \text{ for } \; \; \frac{1}{H^{1/6 + \epsilon}} \le V \ll B^{1/4}.
\]
Here the upper bound for $V$ simply comes from $|M(\frac{3}{4} +  3 it)| \le |M(\frac{3}{4})|$. We want to obtain
\begin{equation} \label{J2Bsplit}
\mathcal{I} := \frac{H}{T} V^2 \int_{S(V)} \bigl| \zeta( \frac{1}{2} + 2 i t) \bigr|^2 dt \ll_\epsilon H^{2/3 - \epsilon}
\end{equation}
for all $V$ as that would imply $J_{2, B} \ll H^{2/3 - \epsilon/4}$. By Lemma \ref{lem-largevalue},
\begin{equation} \label{largevalue}
|S(V)| \ll \Bigl( \frac{\sqrt{B}}{V^2} + T \min \Bigl( \frac{1}{\sqrt{B} V^2}, \frac{1}{\sqrt{B} V^6} \Bigr) \Bigr) \log^6 2X.
\end{equation}
If the first term in \eqref{largevalue} dominates, then by Lemma \ref{lem-subconvex} and $B < H^{2/3 + \epsilon} T^{13/21 + \epsilon}$, the contribution to $\mathcal{I}$ is
\[
\ll T^{13/42 + \epsilon/2} \frac{H}{T} V^2 \frac{\sqrt{B}}{V^2} \log^6 2X \ll \frac{H \sqrt{B}}{T^{29/42 - \epsilon}}
\]
which is acceptable when $T \ge H^{14/29 + 2\epsilon} B^{21/29 + 2 \epsilon}$. We claim that this always holds. Suppose the contrary, we have
\[
\frac{\sqrt{X}}{H} \le T < H^{14/29 + 2\epsilon} B^{21/29 + 2 \epsilon} \; \; \text{ or } \; \; \frac{X^{29/42 - 2\epsilon}}{H^{43/21 + 2 \epsilon}} < B.
\]
This together with $B \le X^{1/3} / H^{2/9 - \epsilon / 3}$ imply $H^{15/42 - 2\epsilon} < H^{115 / 63 + 7 \epsilon/3}$ or $H > X^{9/46 - 3 \epsilon}$, a contradiction. Hence, when the first term in \eqref{largevalue} dominates, we have an acceptable error.

\bigskip

Now, suppose the second term in \eqref{largevalue} dominates. By Cauchy-Schwarz inequality and Lemma \ref{lem-fourthmoment}, the contribution to $\mathcal{I}$ is
\[
\ll \frac{H}{T} V^2 |S(V)|^{1/2} \Bigl( \int_{-T}^{T} \big| \zeta \bigl(\frac{1}{2} + it \bigr) \big|^2 dt \Bigr)^{1/2} \ll H V^2 \min \Bigl( \frac{1}{B^{1/4} V} , \frac{1}{B^{1/4} V^3} \Bigr) \ll \frac{H}{B^{1/4}}
\]
which is acceptable when $B \ge H^{4/3 + \epsilon}$. It remains to deal with the case when $H^{2/3 + \epsilon} \le B < H^{4/3 + \epsilon}$ and $B > T / H^{2/3 + \epsilon}$.

\bigskip

Now, we assume that $H \le X^{1 / (6 - 2 \delta) - \epsilon}$ with $0.02 < \delta < 1/3$. We further break $T / H^{2/3 + \epsilon} < B < H^{4/3 + \epsilon}$ and $\sqrt{X} / H \le T$ into different ranges. Note that the lower and upper bounds on $B$ gives $T < H^{2 + 2 \epsilon}$. 

\bigskip

Firstly, consider $\sqrt{X} / H \le T \le H^{2 - \delta + \epsilon}$. This implies $\sqrt{X} \le H^{3 - \delta + \epsilon}$ or $X^{1/(6 - 2 \delta + 2 \epsilon)} \le H$ which contradicts our assumption on $H$. Hence, we must have $T > H^{2 - \delta + \epsilon}$.

\bigskip

Secondly, consider $B \le H^{4/3 - \delta}$. This implies
\[
T / H^{2/3 + \epsilon} < H^{4/3 - \delta} \; \; \text{ or } \; \; T < H^{2 - \delta + \epsilon}
\]
which contradicts the above range for $T$. Therefore, we can restrict our attention to $H^{2 - \delta + \epsilon} < T \le H^{2 + 2 \epsilon}$ and $H^{4/3 - \delta} < B < H^{4/3 + \epsilon}$. Over these ranges, we have
\[
\frac{1}{H^{1/6 + \epsilon}} \le V \le \Big| M \bigl( \frac{3}{4} + i t \bigr) \Big| \ll \frac{H^{97/42 + 4 \epsilon}}{H^{3 - 9 \delta / 4}} + \frac{H^{1/3 + \epsilon}}{H^{(2 - \delta) / 2}} + \frac{\log H}{H^{1 - 3 \delta/4}} \ll H^{9 \delta / 4 - 29 / 42 + 4 \epsilon}
\]
by Lemma \ref{lem-M} and $0.02 < \delta < 1/3$. This situation cannot happen when $\delta = 44/189 - 3 \epsilon$. Summarizing all of the above, we have an acceptable error for $J_{2, B}$ and, hence, Proposition \ref{prop2} when
\[
X^\epsilon \le H \le X^{\frac{1}{6 - 2 (44/189 - 3 \epsilon)} - \epsilon} = X^{\frac{189}{1046} - 2 \epsilon} = X^{0.180688... - 2 \epsilon}.
\]

%---------------------------------------------------------------------------------------------------
\section{Finishing the proof of Theorem \ref{thm2}}

Applying Proposition \ref{prop3} to Proposition \ref{prop1}, we have 
\[
J_1 = (1 + O(X^{-\epsilon /2})) H^{2/3} \cdot \frac{4 \zeta(4/3)}{3 \zeta(2)} \int_{0}^{\infty} |S(y)|^2 y^{1/3} dy + O_\epsilon ( H^{2/3 - \epsilon / 6} )
\]
when $X^\epsilon \le H \le X^{3/10 - 3 \epsilon}$. By Proposition \ref{prop2}, we have $J_2 \ll H^{2/3 - \epsilon / 8}$ when $X^\epsilon \le H \le X^{\frac{189}{1046} - 2 \epsilon}$. Putting these and \eqref{z-blarge2} into \eqref{JJ} and \eqref{z2}, we get
\[
\frac{1}{X} \int_{X}^{2X} \Big| Q(x+y) - Q(x) - \frac{\zeta(3/2)}{\zeta(3)} H \Big|^2 dx \\
= (1 + O(X^{-\epsilon /2})) H^{2/3} \cdot \frac{4 \zeta(4/3)}{3 \zeta(2)} \int_{0}^{\infty} |S(y)|^2 y^{1/3} dy + O_\epsilon ( H^{2/3 - \epsilon / 16})
\]
when $X^\epsilon \le H \le X^{\frac{189}{1046} - 2 \epsilon}$.

%---------------------------------------------------------------------------------------------------
\section{Proof of Theorem \ref{thm3}}

We can do better assuming the RH or the LH. We set the parameter $\lambda = 0$ here. Then the sum over squarefull numbers $a^2 b^3$ in $I_2$ of \eqref{z-blarge} is empty unless $a = 1$. Hence, $I_2 \ll H^2 / X^{1/3} + 1 \ll H^{2/3 - \epsilon}$ when $H \le X^{1/4 - \epsilon}$. Under either RH or LH, the bound \eqref{tempJ2B} becomes
\begin{equation} \label{tempJ2B1}
J_{2, B} \ll_\epsilon H X^{\epsilon^2 / 4} \sup_{\sqrt{X} / H \le T \le X^2} \frac{1}{T} \int_{-T}^{T} \big|M(\frac{3}{4} + 3 i t) \big|^2 \, dt + 1
\end{equation}
and we do not need to worry about whether $B \le T$ or not. Using the same notation $S(V)$ for large value of Dirichlet polynomial as in Section \ref{sec-V}, we again have the bound \eqref{largevalue}. Suppose the first term in \eqref{largevalue} dominates, then such contribution to $J_{2, B}$ is
\[
\ll_\epsilon H X^{\epsilon^2 / 2} \sup_{\sqrt{X} / H \le T \le X^2} \frac{\sqrt{B}}{T} \ll \frac{H^2 X^{\epsilon^2 / 2} \sqrt{B}}{\sqrt{X}} \ll_\epsilon H^{2/3 - \epsilon / 4}
\]
when $B \ll X^{1 - \epsilon^2} / H^{8/3 + \epsilon}$. Since $B \ll X^{1/3}$, this is true when $H \le X^{1 / 4 - \epsilon}$.

\bigskip

Suppose the second term in \eqref{largevalue} dominates, then such contribution to $J_{2, B}$ is
\[
\ll_\epsilon H X^{\epsilon^2 / 4} V^2 \frac{1}{\sqrt{B}} \min \Bigl( \frac{1}{V^2}, \frac{1}{V^6} \Bigr) \ll \frac{H X^{\epsilon^2 / 2}}{\sqrt{B}} \ll H^{2/3 - \epsilon / 4}
\]
since $H^{2/3 + \epsilon} \le B$ and $H \ge X^{\epsilon}$. Applying all these and Propositions \ref{prop1} and \ref{prop3} to \eqref{JJ} and \eqref{z2}, we get
\[
\frac{1}{X} \int_{X}^{2X} \Big| Q(x+y) - Q(x) - \frac{\zeta(3/2)}{\zeta(3)} H \Big|^2 dx \\
= H^{2/3} \cdot \frac{4 \zeta(4/3)}{3 \zeta(2)} \int_{0}^{\infty} |S(y)|^2 y^{1/3} dy + O_\epsilon ( H^{2/3 - \epsilon / 16})
\]
when $X^\epsilon \le H \le X^{1/4 - \epsilon}$, and hence the theorem.

%-----------------------------------------------------------------------------------------------------------
\section{Proof of Proposition \ref{prop3}}

In this section, we focus on finding an asymptotic formula for the diagonal contribution:
\begin{equation} \label{diagonal-main}
2 H^2 \sum_{b \le H^{2/3 + \epsilon}} \frac{\mu^2(b)}{b^3} \sum_{n \ge 1} S\Bigl( \frac{n H}{b^{3/2}} \Bigr)^2 \; \; \text{ where } \; \; S(x) = \frac{\sin \pi x}{\pi x}.
\end{equation}
As in \cite{GMRR} and \cite{C}, we consider a smoother sum first. Fix $\epsilon > 0$ and $K_0 > 0$. Suppose that $w: \mathbb{R} \rightarrow \mathbb{C}$ satisfies
\begin{equation} \label{deriv}
|w^{(k)}(y)| \le K_0 \frac{H^{l \epsilon / 4}}{(1 + |y|)^l}
\end{equation}
for all $k, l \in \{0, 1, 2, 3, 4 \}$. We claim that
\begin{equation} \label{smoothW}
2 H^2 \sum_{b \le H^{2/3 + \epsilon}} \frac{\mu^2(b)}{b^3} \sum_{n \ge 1} \Big| w\Bigl( \frac{n H}{b^{3/2}} \Bigr) \Big|^2 = H^{2/3} \frac{4 \zeta(4/3)}{3 \zeta(2)} \int_{0}^{\infty} |w(y)|^2 y^{1/3} dy + O_\epsilon (H^{2/3 - \epsilon / 4} ).
\end{equation}

Proof of claim: This is very similar to Lemma 8 in \cite{GMRR}. Firstly, we complete the sum over all $b$. By \eqref{deriv} for $k = 0$ and $l = 0, 1$, we have
\[
\sum_{n \ge 1} |w(n / \nu)|^2 \ll \sum_{1 \le n \le \nu H^{\epsilon / 4}} 1 + \sum_{n > \nu H^{\epsilon / 4}} \frac{H^{\epsilon / 2} \nu^2}{n^2} \ll_\epsilon \nu H^{\epsilon / 4}
\]
for any $\nu > 0$. Hence,
\[
2 H^2 \sum_{b > H^{2/3 + \epsilon}} \frac{\mu^2(b)}{b^3} \sum_{n \ge 1} \Big| w\Bigl( \frac{n H}{b^{3/2}} \Bigr) \Big|^2 \ll_\epsilon H^{1 + \epsilon / 4} \sum_{b > H^{2/3 + \epsilon}} \frac{1}{b^{3/2}} \ll H^{2/3  - \epsilon/4}.
\]
Thus, the left hand side of \eqref{smoothW} is
\begin{equation} \label{smoothW2}
2 H^2 \sum_{b \ge 1} \frac{\mu^2(b)}{b^3} \sum_{n \ge 1} \Big| w\Bigl( \frac{n H}{b^{3/2}} \Bigr) \Big|^2 + O_\epsilon (H^{2/3  - \epsilon/4}).
\end{equation}

Secondly, we use contour integration to simplify \eqref{smoothW2}. Define $g(x) := |w(e^x)|^2 e^x$, which is smooth and decays exponentially as $|x| \rightarrow \infty$ by \eqref{deriv}. Its Fourier transform is given by
\[
\hat{g}(\xi) = \int_{-\infty}^{\infty} |w(e^x)|^2 e^x e^{-2 \pi i x \xi} dx = \int_{0}^{\infty} |w(y)|^2 y^{-2 \pi i \xi} dy,
\]
and one can show that (i) $\hat{g}(\xi)$ is entire and (ii) $\hat{g}(\xi) = O(H^{2\epsilon} / (|\xi| + 1)^3)$ uniformly for $|\Im(\xi)| < 1 / (2 \pi)$ by \eqref{deriv} and standard partial summation argument. Then Fourier inversion gives, for $r > 0$,
\[
|w(r)|^2 = r^{-1} \frac{1}{2 \pi i} \int_{(c)} r^s \hat{g} \Bigl( \frac{s}{2 \pi i} \Bigr) ds,
\]
where the integral is over $\Re(s) = c$ with $-1 < c < 1$. Hence, taking $c = - 1/4$,
\[
2 H^2 \sum_{b \ge 1} \frac{\mu^2(b)}{b^3} \sum_{n \ge 1} \Big| w\Bigl( \frac{n H}{b^{3/2}} \Bigr) \Big|^2 = \frac{H}{i \pi} \sum_{b \ge 1} \frac{\mu^2(b)}{b^{3/2}} \sum_{n \ge 1} \frac{1}{n} \int_{(-1/4)} H^s n^s b^{-3s/2} \hat{g}\Bigl( \frac{s}{2 \pi i} \Bigr) ds.
\]
Due to absolute convergence of the series, one can take the sums inside the integral and the above becomes
\[
\frac{H}{i \pi} \int_{(-1/4)}  H^s  \zeta(1 - s) \frac{\zeta(\frac{3}{2}(1 + s))}{\zeta(3(1+s))}  \hat{g}\Bigl( \frac{s}{2 \pi i} \Bigr) ds
\]
by \eqref{zeta}. Because of the bounds on $\hat{g}$ and $\zeta(1/2 + it) \ll t^{1/6} \log^2 t$, we can push the contour integral above to the left to an integral over $\Re(s) = -2/3$. Picking out a residue at $s = -1/3$ from the singularity of $\zeta(\frac{3}{2}(1 + s))$, the above simplifies to
\[
\frac{4 \zeta(4/3)}{3 \zeta(2)} \hat{g} \Bigl( - \frac{1}{6 \pi i} \Bigr) H^{2/3} + O(H^{1/3 + 2\epsilon}) =  \frac{4 \zeta(4/3)}{3 \zeta(2)} \int_{0}^{\infty} |w(y)|^2 y^{1/3} dy \cdot H^{2/3} + O(H^{1/3 + 2 \epsilon}).
\]
This and \eqref{smoothW2} finish the proof of the claim. To deal with \eqref{diagonal-main}, we introduce the function
\[
w(y) := S(y) h\Bigl( \frac{y}{H^{\epsilon/4}} \Bigr)
\]
where $h$ is a smooth bump function such that $h(x) = 1$ for $|x| \le 1$ and $h(x) = 0$ for $|x| \ge 2$ (for example, one can take $h(x) = \sigma_{10, 2, 0.1}^{+}(x + 3)$). Then one can check that $w(y)$ satisfies the hypothesis \eqref{deriv}. For such $w$,
\begin{align} \label{sum-error}
2H^2 \sum_{b \le H^{2/3 + \epsilon}} & \frac{\mu^2(b)}{b^3} \sum_{n \ge 1} \biggl( S\Bigl( \frac{n H}{b^{3/2}} \Bigr)^2 - w \Bigl( \frac{n H}{b^{3/2}} \Bigr)^2 \biggr) \ll H^2 \mathop{\sum_{b \le H^{2/3 + \epsilon}, n \ge 1}}_{n H^{1 - \epsilon/4} \ge b^{3/2}} \frac{1}{b^3} \frac{1}{(n H / b^{3/2})^2} \nonumber \\
&\ll \sum_{b \le H^{2/3 - \epsilon/6}} \sum_{n \ge 1} \frac{1}{n^2} + \sum_{b > H^{2/3 - \epsilon/6}} \sum_{n \ge b^{3/2} / H^{1 - \epsilon/4}} \frac{1}{n^2} \ll_\epsilon H^{2/3 - \epsilon/6}.
\end{align}
On the other hand,
\begin{equation} \label{integral-error}
\int_{0}^{\infty} |S(y)|^2 y^{1/3} dy - \int_{0}^{\infty} |w(y)|^2 y^{1/3} dy \ll \int_{H^{\epsilon/4}}^{\infty} y^{-5/3} dy \ll_\epsilon H^{-\epsilon / 6}.
\end{equation}
Combining \eqref{smoothW}, \eqref{sum-error} and \eqref{integral-error}, we have
\[
2 H^2 \sum_{b \le H^{2/3 + \epsilon}} \frac{\mu^2(b)}{b^3} \sum_{n \ge 1} \Big| S\Bigl( \frac{n H}{b^{3/2}} \Bigr) \Big|^2 = H^{2/3} \frac{4 \zeta(4/3)}{3 \zeta(2)} \int_{0}^{\infty} |S(y)|^2 y^{1/3} dy + O_\epsilon (H^{2/3 - \epsilon/6})
\]
which gives Proposition \ref{prop3}.

\bigskip

{\bf Acknowledgment:} The author would like to thank Zeev Rudnick and Yuk-Kam Lau for encouraging and helpful discussions during the HKU Number Theory Days conference on June 26-30, 2023 while part of this work was presented. He also wants to thank the Department of Mathematics of the University of Hong Kong for their hospitality.

%--------------------------------------------------------------------------------------------------------
\bibliographystyle{amsplain}

Mathematics Department \\
Kennesaw State University \\
Marietta, GA 30060 \\
tchan4@kennesaw.edu

\end{document}